\setlist[enumerate,1]{label=(\arabic*),font=\textup,
leftmargin=7mm,labelsep=1.5mm,topsep=0mm,itemsep=-0.8mm}
\setlist[enumerate,2]{label=(\alph*),font=\textup,
leftmargin=7mm,labelsep=1.5mm,topsep=-0.8mm,itemsep=-0.8mm}
\newtheorem{theorem}{Theorem}[section]
\newtheorem{lemma}{Lemma}[section]
\newtheorem{conjecture}{Conjecture}[section]
\theoremstyle{definition}
\numberwithin{equation}{section}
\begin{document}
	\begin{frontmatter}
		\title{Some results on the Turán number of $k_1P_{\ell}\cup k_2S_{\ell-1}$\,\tnoteref{titlenote}}
		\tnotetext[titlenote]{This work was supported by the National Nature Science Foundation of China (Nos.11871040, 12271337)}
		
		\author{Tao Fang}
		\ead{tao2021@shu.edu.cn}
		
		\author{Xiying Yuan\corref{correspondingauthor}}
        \cortext[correspondingauthor]{Corresponding author. Email address: {\tt xiyingyuan@shu.edu.cn} (Xiying Yuan)}

		\address{Department of Mathematics, Shanghai University, Shanghai 200444, P.R. China}
				
		\begin{abstract}
		  The Turán number of a graph $H$, denoted by $ex(n, H)$, is the maximum number of edges in any graph on $n$ vertices containing no $H$ as a subgraph. Let $P_{\ell}$ denote the path on $\ell$ vertices, $S_{\ell-1}$ denote the star on $\ell$ vertices and $k_1P_{\ell}\cup k_2S_{\ell-1}$ denote the path-star forest with disjoint union of $k_1$ copies of $P_{\ell}$ and $k_2$ copies of $S_{\ell-1}$. In 2013, Lidický et al. first considered the Turán number of $k_1P_4\cup k_2S_3$ for sufficiently large $n$. In 2022, Zhang and Wang raised a conjecture about the Turán number of $k_1P_{2\ell}\cup k_2S_{2\ell-1}$. In this paper, we determine the Turán numbers of $P_{\ell}\cup kS_{\ell-1}$, $k_1P_{2\ell}\cup k_2S_{2\ell-1}$, $2P_5\cup kS_4$ for $n$ appropriately large, which implies the conjecture of Zhang and Wang. The corresponding extremal graphs are also completely characterized.
		\end{abstract}
		
		\begin{keyword}
			Turán number\sep
			path-star forest\sep
            extremal graph
			\MSC[2010]
			05C05
            05C35
		\end{keyword}
	\end{frontmatter}
	
\section{Introduction}\label{sec1}
In this paper, all graphs considered are undirected, finite and contain neither loops nor multiple edges. The vertex set of a graph $G$ is denoted by $V(G)$, the edge set of $G$ by $E(G)$, the number of the vertices in $G$ by $v(G)$ and the number of edges in $G$ by $e(G)$. Let $K_n$, $P_n$, $S_{n-1}$ denote the complete graph, path and star on $n$ vertices, respectively. For a vertex $v\in V(G)$, let $N_G(v)$ denote the set of vertices in $G$ which are adjacent to $v$ and $d_G(v)$ denote the degree of a vertex $v$, i.e., $d_G(v)=\left|N_G(v)\right|$. Given two vertex-disjoint graphs $G$ and $H$, let $G\cup H$ denote the disjoint union of graphs $G$ and $H$, $kG$ the disjoint union of $k$ copies of $G$, and $G\vee H$ the graph obtained from $G\cup H$ by joining all vertices of $G$ to all vertices of $H$. We use $\overline{G}$ to denote the complement of the graph $G$. For any set $S\subseteq V(G)$, let $G[S]$ denote the subgraph of $G$ induced by $S$, $\left|S\right|$ denote the cardinality of $S$. For a graph $G$ and its subgraph $H$, let $G-H$ denote the subgraph induced by $V(G)\backslash V(H)$.

The Turán number of a graph $H$, $ex(n, H)$, is the maximum number of edges in $G$ of order $n$ that does not contain a copy of $H$. Denote by $\mathbb{EX}(n,H)$ the set of graphs on $n$ vertices with $ex(n,H)$ edges containing no $H$ as a subgraph and call the graph from $\mathbb{EX}(n, H)$ the extremal graph for $H$ or $H$-extremal graph. If $\mathbb{EX}(n, H)$ contains only one graph, we may simply use $\mbox{\rm{EX}}(n, H)$ instead.

The study of Turán numbers of forests began with the famous result of Erdös and Gallai \cite{Erdos1959}  in 1956. Then in 1975, Faudree and Schelp \cite{faudree1975path} gave an improvement of the extremal graph for $P_k$.

\begin{theorem}\cite{Erdos1959}\label{Erdos}
  Let $n=d(\ell-1)\geqslant 2$, where $d\geqslant 1$. Then $$ex(n, P_{\ell})= \frac{(\ell-2)n}{2}.$$
  Furthermore, $$\mbox{\rm{EX}}(n, P_{\ell})=dK_{\ell-1}.$$
\end{theorem}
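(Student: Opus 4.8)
The plan is to get the lower bound from the obvious construction and the upper bound, together with the extremal characterization, from the classical longest‑path / rotation argument of Erdős and Gallai, with the equality analysis reduced to a statement about connected graphs. For the lower bound, note that $dK_{\ell-1}$ has exactly $d\binom{\ell-1}{2}=\frac{(\ell-2)\,d(\ell-1)}{2}=\frac{(\ell-2)n}{2}$ edges, and since each of its components has only $\ell-1$ vertices it contains no copy of the $\ell$‑vertex path $P_{\ell}$; hence $ex(n,P_{\ell})\ge \frac{(\ell-2)n}{2}$.

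For the upper bound I would prove the stronger assertion that $e(G)\le \frac{(\ell-2)m}{2}$ for every $P_{\ell}$‑free graph $G$ on $m$ vertices, by induction on $m$. If $m\le \ell-1$ this is immediate, since $e(G)\le\binom{m}{2}\le\frac{(\ell-2)m}{2}$. If $m\ge \ell$ and $G$ is disconnected, apply the induction hypothesis to each (smaller) component and sum; so assume $G$ is connected. Let $P=v_0v_1\cdots v_p$ be a longest path in $G$; as $G$ is $P_{\ell}$‑free we have $p\le\ell-2$, and every neighbour of $v_0$ or of $v_p$ lies on $P$. If there were an index $i$ with $v_0v_i\in E(G)$ and $v_pv_{i-1}\in E(G)$, then $G$ would contain a cycle on all $p+1$ vertices of $P$; since $P$ is longest no vertex outside this cycle can be adjacent to it, so connectivity would force the cycle to span $V(G)$, giving $m=p+1\le\ell-1$, contrary to $m\ge\ell$. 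Hence no such $i$ exists, and then the map $v_j\mapsto v_{j-1}$ sends $N_G(v_0)$ injectively into $\{v_0,\dots,v_{p-1}\}$ avoiding $N_G(v_p)$, so $d_G(v_0)+d_G(v_p)\le p\le\ell-2$. Thus one endpoint, say $v_0$, has $d_G(v_0)\le\lfloor(\ell-2)/2\rfloor\le\frac{\ell-2}{2}$; deleting it and applying the induction hypothesis to $G-v_0$ gives $e(G)=e(G-v_0)+d_G(v_0)\le\frac{(\ell-2)(m-1)}{2}+\frac{\ell-2}{2}=\frac{(\ell-2)m}{2}$, completing the induction.

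For the characterization of $\mathrm{EX}(n,P_{\ell})$, let $G$ be $P_{\ell}$‑free on $n=d(\ell-1)$ vertices with $e(G)=\frac{(\ell-2)n}{2}$, and let $G_1,\dots,G_t$ be its components with $n_i=v(G_i)$. The crucial point is that a connected $P_{\ell}$‑free graph on $m\ge\ell$ vertices satisfies $e<\frac{(\ell-2)m}{2}$ strictly: when $\ell$ is odd this already falls out of the argument above, since there $d_G(v_0)\le\frac{\ell-3}{2}$ yields a gain of $\tfrac12$, and for $\ell$ even it follows from the refinement of Faudree and Schelp on connected $P_{\ell}$‑free extremal graphs. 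Consequently every $n_i\le\ell-1$, so $e(G_i)\le\binom{n_i}{2}\le\frac{(\ell-2)n_i}{2}$, with equality throughout only when $n_i=\ell-1$ and $G_i=K_{\ell-1}$. Summing, $\frac{(\ell-2)n}{2}=e(G)=\sum_i e(G_i)\le\sum_i\frac{(\ell-2)n_i}{2}=\frac{(\ell-2)n}{2}$, so equality must hold in every term; this forces $t=d$ and $G=dK_{\ell-1}$.

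The inequality itself is a clean induction, so the main obstacle is the extremal characterization: one must rule out a component on $\ell$ or more vertices attaining the bound $\frac{(\ell-2)m}{2}$. For $\ell$ odd the rotation argument supplies the required strict gap for free, but for $\ell$ even this is exactly where the crude edge count is insufficient and one needs the sharper information about connected $P_{\ell}$‑free graphs provided by Faudree–Schelp (or a comparable, more delicate extension of the longest‑path analysis).
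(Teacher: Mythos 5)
The paper does not prove this statement at all --- it is quoted from Erd\H{o}s--Gallai --- so there is no in-paper argument to measure you against; I assess the proposal on its own terms. Your lower bound is correct, and your proof of the edge inequality $e(G)\le\frac{(\ell-2)m}{2}$ is correct and complete: the induction on $m$, the reduction to connected graphs, the rotation argument giving $d_G(v_0)+d_G(v_p)\le p\le\ell-2$ for a longest path in a connected $P_\ell$-free graph on $m\ge\ell$ vertices, and the deletion step are all sound.

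The genuine gap is in the extremal characterization for even $\ell$. Everything there rests on the claim that a connected $P_\ell$-free graph on $m\ge\ell$ vertices has \emph{strictly} fewer than $\frac{(\ell-2)m}{2}$ edges, and for even $\ell$ you do not prove it: you appeal to ``the refinement of Faudree and Schelp on connected $P_\ell$-free extremal graphs.'' That is not a proof. The fact you need is the Kopylov / Balister--Gy\H{o}ri--Lehel--Schelp bound for connected graphs without long paths, and even granting it you would still have to verify that its value falls strictly below $\frac{(\ell-2)m}{2}$, a computation you omit; moreover the Faudree--Schelp characterization (Theorem 1.2 of this paper) is itself a refinement of the theorem you are proving, so leaning on it is uncomfortably close to circular. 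The gap closes cleanly with no external input if you strengthen the induction hypothesis to carry the characterization along: prove simultaneously that $e(G)\le\frac{(\ell-2)m}{2}$ and that equality forces $G$ to be a disjoint union of copies of $K_{\ell-1}$ (so in particular $(\ell-1)\mid m$). Then in the connected case with $m\ge\ell$ and $\ell$ even, equality in $e(G)\le e(G-v_0)+\frac{\ell-2}{2}$ would force $e(G-v_0)=\frac{(\ell-2)(m-1)}{2}$, hence by the induction hypothesis $G-v_0$ is a nonempty disjoint union of copies of $K_{\ell-1}$; but $G$ is connected, so $v_0$ has a neighbour $u$ in one of these cliques, and $v_0$ followed by a Hamiltonian path of that clique starting at $u$ is a $P_\ell$ in $G$, a contradiction. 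This yields the strict inequality for all $\ell$ and makes your component-summing argument complete.
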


The following two symbols are defined in \cite{yuan2021turan}. Let $n\geqslant m\geqslant \ell \geqslant 2$ be three positive integers and $n=(m-1)+d(\ell-1)+r$ with $d\geqslant 0$ and $0\leqslant r<\ell -1$. Define
\begin{equation*}
[n, m, \ell ]=\binom{m-1}{2}+d\binom{\ell-1}{2}+\binom{r}{2}.
\end{equation*}
Let $n$ and $s$ be two positive integers and $n\geqslant s$. Define
\begin{equation*}
[n,s]=\binom{s-1}{2}+(s-1)(n-s+1).
\end{equation*}

\begin{theorem}\cite{faudree1975path}\label{faudree}
   Let $n=d(\ell-1)+r$, where $d\geqslant 1$ and $0\leqslant r<\ell-1$. Then $$ex(n, P_{\ell})= [n, \ell, \ell].$$
   Furthermore, if $\ell$ is even, $r={\ell}/2$ or $(\ell-2)/2$, then
   $$\mathbb{EX}(n, P_{\ell})=\left\{dK_{\ell-1}\cup K_r, \left(\left(d-s-1\right)K_{\ell-1}\right)\cup \left(K_{\frac{\ell-2}{2}}\vee \overline{K}_{\frac{\ell}{2}+s\left(\ell-1\right)+r}\right), s=0, 1, \cdots, d-1\right\};$$
   if otherwise, then
   $$\mbox{\rm{EX}}(n, P_{\ell})=dK_{\ell-1}\cup K_r.$$
\end{theorem}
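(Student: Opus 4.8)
For the lower bound I would exhibit the claimed graphs. In $dK_{\ell-1}\cup K_r$ every component has at most $\ell-1$ vertices, so it contains no $P_\ell$, and it has $d\binom{\ell-1}{2}+\binom{r}{2}=[n,\ell,\ell]$ edges; hence $ex(n,P_\ell)\ge[n,\ell,\ell]$. When $\ell$ is even and $r\in\{(\ell-2)/2,\ell/2\}$ I would also verify that each block $B_s:=K_{(\ell-2)/2}\vee\overline{K}_{\ell/2+s(\ell-1)+r}$ is $P_\ell$-free --- a longest path of $K_a\vee\overline{K}_b$ with $b\ge a+1$ uses only $2a+1$ vertices, and here $2a+1=\ell-1$ --- and that a short computation gives $e(B_s)=[\,(s+1)(\ell-1)+r,\,\ell,\,\ell\,]$, so that $\bigl((d-s-1)K_{\ell-1}\bigr)\cup B_s$ is again $P_\ell$-free with exactly $[n,\ell,\ell]$ edges. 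This puts all the listed graphs on the table as candidate extremal graphs.

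For the upper bound I would show $e(G)\le[n,\ell,\ell]$ for every $P_\ell$-free $G$ on $n$ vertices by induction on $n$, first recording the superadditivity lemma $[n_1,\ell,\ell]+[n_2,\ell,\ell]\le[n_1+n_2,\ell,\ell]$: writing $n_i=d_i(\ell-1)+r_i$, this is just $\binom{r_1+r_2}{2}\ge\binom{r_1}{2}+\binom{r_2}{2}$ when $r_1+r_2<\ell-1$ and a one-line estimate (in fact strict) when $r_1+r_2\ge\ell-1$, with equality forcing $r_1+r_2<\ell-1$ and $r_1r_2=0$. If $G$ is disconnected, apply the induction hypothesis to its components together with the lemma. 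If $G$ is connected and $n\ge\ell$ (the base case $n=\ell-1$ being trivial), take a longest path $P=v_0v_1\cdots v_p$; then $p\le\ell-2$ and all neighbours of $v_0$ and of $v_p$ lie on $P$, and a P\'osa-type rotation shows that an index $i$ with $v_i\in N_G(v_p)$ and $v_{i+1}\in N_G(v_0)$ would create a cycle through $V(P)$, hence a path on $p+2$ vertices (using a vertex outside $P$, which exists as $G$ is connected with $n>p+1$) --- a contradiction. So $d_G(v_0)+d_G(v_p)\le p\le\ell-2$, and deleting the end-vertex $v\in\{v_0,v_p\}$ of smaller degree gives $e(G)\le e(G-v)+\lfloor(\ell-2)/2\rfloor\le[n-1,\ell,\ell]+\lfloor(\ell-2)/2\rfloor$.

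Now $[n-1,\ell,\ell]+\lfloor(\ell-2)/2\rfloor\le[n,\ell,\ell]$ holds exactly when $r=0$ or $r\ge\lfloor\ell/2\rfloor$, so the remaining case --- the main obstacle --- is a connected $P_\ell$-free graph with $1\le r\le\lfloor\ell/2\rfloor-1$. For this regime I would sharpen the deletion step (removing an end-segment of $P$, or $v$ together with $N_G(v)$, rather than one vertex), or pass to a leaf block of $G$ and invoke the Erd\H{o}s--Gallai/Kopylov-type bound for $2$-connected $P_\ell$-free graphs; the upshot is that such a graph is forced to be a clique or of the form $K_{(\ell-2)/2}\vee\overline{K}_b$, which simultaneously closes the count. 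For the extremal-graph characterization I would re-run the induction tracking equality: in the disconnected step the equality part of the superadditivity lemma forces all but one component to be vertex-disjoint unions of $K_{\ell-1}$'s, the exceptional component carrying the whole remainder; in the connected step, equality pins that component down to $K_r$ or (for $\ell$ even and $r\in\{(\ell-2)/2,\ell/2\}$) to one of the graphs $K_{(\ell-2)/2}\vee\overline{K}_{\ell/2+s(\ell-1)+r}$, giving precisely the two cases in the statement. I expect the small-$r$ connected case, and the bookkeeping needed there to exclude spurious extremal graphs, to be the most delicate part of the argument.
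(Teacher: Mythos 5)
The paper does not actually prove this statement --- it is quoted verbatim from Faudree and Schelp --- so there is no internal proof to compare against; I can only judge your sketch on its own terms. The easy parts check out: the lower-bound constructions and their edge counts are correct (in particular $e\bigl(K_{(\ell-2)/2}\vee\overline{K}_{\ell/2+s(\ell-1)+r}\bigr)=(s+1)\binom{\ell-1}{2}+\binom{r}{2}$, and a longest path of $K_a\vee\overline{K}_b$ with $b\ge a+1$ indeed has $2a+1$ vertices); the superadditivity of $[\,\cdot\,,\ell,\ell]$ with its equality condition is right; and the Erd\H{o}s--Gallai rotation argument giving $d_G(v_0)+d_G(v_p)\le p$ plus the one-vertex deletion is sound and, as you correctly compute, closes the induction exactly when $r=0$ or $r\ge\lfloor\ell/2\rfloor$.

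The gap is where you flag it, and it is not a routine one. For a connected $P_\ell$-free graph with $1\le r\le\lfloor\ell/2\rfloor-1$ you offer two candidate strategies and an asserted ``upshot'' but no argument. That regime is precisely the content of the theorem beyond Erd\H{o}s--Gallai: it is where $[n,\ell,\ell]$ must beat the naive bound $[n-1,\ell,\ell]+\lfloor(\ell-2)/2\rfloor$, and it is where the exceptional extremal graphs $K_{(\ell-2)/2}\vee\overline{K}_b$ live (note that $r=(\ell-2)/2$ lies in this range). The asserted upshot is also not literally true: a connected $P_\ell$-free graph in this regime need not be a clique or a join. What is actually needed is a sharp edge bound for connected (or, via end-blocks, $2$-connected) $P_\ell$-free graphs of the Kopylov/Balister--Gy\H{o}ri--Lehel--Schelp type, followed by a residue-by-residue comparison of that bound with $[n,\ell,\ell]$; that bound is itself a theorem with a nontrivial proof. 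Without it, neither your upper bound nor your determination of $\mathbb{EX}(n,P_\ell)$ --- whose equality tracking leans on the same connected-case analysis --- is established. As it stands, the proposal is a well-organized reduction of the theorem to its hardest known ingredient, not a proof of it.
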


We follow the notation and terminology of \cite{liu2013turan}. A linear forest is a forest whose connected components are paths. A star forest is a forest whose connected components are stars. A path-star forest is a forest whose connected components are paths and stars. In 2011, Bushaw and Kettle \cite{bushaw2011turan} determined the Turán numbers of $kP_{\ell}$ for sufficiently large $n$, which was extended by Lidiciký et al. \cite{liu2013turan}. Yuan and Zhang \cite{yuan2017turan, yuan2021turan} determined the Turán numbers of linear forests containing at most one odd path for all $n$. For special linear forest, Bielak and Kieliszek \cite{bielak2016} and Yuan and Zhang \cite{yuan2021turan} independently determined $ex(n, 2P_5)$ for all $n$ and characterized all extremal graphs.

\begin{lemma}\cite{bielak2016,yuan2021turan}\label{2P5}
  Let $n\geqslant10$. Then
  $$ex(n, 2P_5)=\mbox{\rm{max}}\big\{[n, 10,5], 3n-5\big\}.$$
  The extremal graphs are $K_9\cup \mbox{\rm{EX}}(n, P_5)$ and $K_3\vee\left(K_2\cup \overline{K}_{n-5}\right)$.
\end{lemma}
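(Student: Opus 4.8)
The plan is to prove the two matching bounds and then read off the extremal graphs by tracking equality.

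\emph{Lower bound.} I would first check that the two candidate graphs are $2P_5$-free and count their edges. For $K_9\cup\mbox{\rm{EX}}(n-9,P_5)$: the component $K_9$ has only $9<10$ vertices, hence contains no $2P_5$, while the second component is $P_5$-free by definition, so a $2P_5$ would be forced to put a $P_5$ in each component, which is impossible; its edge count is $\binom92+ex(n-9,P_5)=\binom92+[n-9,5,5]=[n,10,5]$ by Theorem~\ref{faudree} and the definition of $[\cdot,\cdot,\cdot]$. For $K_3\vee(K_2\cup\overline K_{n-5})$: since $K_2\cup\overline K_{n-5}$ has at most one edge, every copy of $P_5$ uses at least two of the three apex vertices (a $P_5$ using at most one apex vertex would have at least two of its four edges lying inside $K_2\cup\overline K_{n-5}$, impossible); hence two vertex-disjoint copies would need at least four apex vertices, a contradiction. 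Its edge count is $3+3(n-3)+1=3n-5$.

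\emph{Upper bound: reduction.} Let $G$ be $2P_5$-free with $n\ge10$ vertices. If $G$ is $P_5$-free, then $e(G)\le ex(n,P_5)=[n,5,5]$ by Theorem~\ref{faudree}, and a short computation gives $[n,5,5]<[n,10,5]$. Otherwise $G$ contains a $P_5$; since two vertex-disjoint copies are forbidden, exactly one component $D$ of $G$ contains a $P_5$. Write $m=v(D)$ and $R=G-D$; then $R$ is $P_5$-free, so $e(R)\le[n-m,5,5]$. It therefore suffices to show that a connected $2P_5$-free graph $D$ on $m$ vertices containing a $P_5$ satisfies
\[
e(D)\le\max\Bigl\{3m-5,\ \binom92+[m-9,5,5]\Bigr\}.
\]
Indeed, adding $e(R)\le[n-m,5,5]$: in the first case $e(G)\le(3m-5)+3(n-m)=3n-5$, using that $[k,5,5]\le3k$; in the second case $e(G)\le\binom92+[m-9,5,5]+[n-m,5,5]\le\binom92+[n-9,5,5]=[n,10,5]$, using the superadditivity of $ex(\cdot,P_5)$.

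\emph{Upper bound: the core estimate.} To bound $e(D)$ I would fix a longest path $Q=v_1v_2\cdots v_p$ of $D$. Because $v_1\cdots v_5$ and $v_6\cdots v_p$ are vertex-disjoint copies of $P_5$ once $p\ge10$, we get $5\le p\le9$; moreover $D-V(Q)$ is $P_5$-free (being disjoint from $v_1\cdots v_5$), so $e(D-V(Q))\le[m-p,5,5]$, and it remains to control $e(D[V(Q)])\le\binom p2$ together with the number of edges joining $V(Q)$ to $V(D)\setminus V(Q)$. Here I would carry out a case analysis on $p$: a vertex outside $Q$ is adjacent to neither $v_1$ nor $v_p$ (else $Q$ is not longest); pushing further, such a vertex adjacent to a ``$P_5$-splitting'' vertex of $Q$ is forbidden (for instance, if $p=9$ and $u\notin V(Q)$ is adjacent to $v_6$, then $uv_6v_7v_8v_9$ and $v_1v_2v_3v_4v_5$ form a $2P_5$), and likewise for pairs of outside vertices that attach to $Q$ incompatibly; in the denser situations the same idea forces $D[V(Q)]$ to be far from complete. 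Iterating these forbidden-configuration arguments I expect to reach the dichotomy: either at most three vertices of $D$ meet every copy of $P_5$ in at least two vertices --- which confines $D$ to a subgraph of $K_3\vee H$ with $H$ of bounded longest path, yielding $e(D)\le3m-5$ --- or $D$ is essentially a clique, namely $K_9$ with $m=9$, or $K_8$ with every remaining vertex pendant to a single vertex of it, in each of which $e(D)\le\binom92+[m-9,5,5]$. Tracking which inequalities are tight then identifies $K_9\cup\mbox{\rm{EX}}(n-9,P_5)$ and $K_3\vee(K_2\cup\overline K_{n-5})$ as the only extremal graphs for all sufficiently large $n$.

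\emph{Main obstacle.} The delicate part is precisely this structural analysis, and within it the ``medium'' lengths $p=6,7,8$: there a single outside vertex, or even a pair of them, joined to $Q$ need not by itself create a second disjoint $P_5$, so one must simultaneously balance $e(D[V(Q)])$, the number of $V(Q)$-to-outside edges, and $e(D-V(Q))$, showing that whenever one is large the others are forced small enough that the total never beats $\max\{3m-5,\binom92+[m-9,5,5]\}$. This trade-off bookkeeping is where I expect the real work to lie.
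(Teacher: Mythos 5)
First, a point of comparison: the paper does not prove this lemma at all --- it is imported verbatim from the cited sources (Bielak--Kieliszek and Yuan--Zhang), so there is no in-paper proof to measure your attempt against. Your lower-bound verification is correct (including the tacit correction of the paper's typo $K_9\cup\mbox{\rm{EX}}(n,P_5)$ to $K_9\cup\mbox{\rm{EX}}(n-9,P_5)$), and so is the reduction to a single connected component $D$ containing the $P_5$, together with the superadditivity argument recombining $e(D)$ with $e(R)\leqslant[n-m,5,5]$.

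The genuine gap is exactly where you flag it: the core estimate $e(D)\leqslant\max\bigl\{3m-5,\ \binom{9}{2}+[m-9,5,5]\bigr\}$ for a connected $2P_5$-free graph $D$ containing a $P_5$ is announced, not proven, and this inequality \emph{is} the theorem --- everything around it is routine. The longest-path setup ($5\leqslant p\leqslant 9$, and $D-V(Q)$ being $P_5$-free) controls only $e(D-V(Q))$; it says nothing yet about $e(D[V(Q)])+e\left(V(Q),V(D)\setminus V(Q)\right)$, which a priori can reach $\binom{p}{2}+p(m-p)$, far above $3m-5$. You also cannot shortcut via the connected $P_{10}$-free bound of Balister--Gy\"ori--Lehel--Schelp (note $2P_5$-free implies $P_{10}$-free but not conversely): that bound is only about $4m-10$, attained by $K_4\vee\overline{K}_{m-4}$, so the disjointness condition must be used directly. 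Moreover, your announced dichotomy is too loose as stated: confining $D$ to $K_3\vee H$ with $H$ of ``bounded longest path'' yields only $e(D)\leqslant 3m-6+e(H)$, and you need $e(H)\leqslant 1$; for instance $K_3\vee\left(2K_2\cup\overline{K}_{m-7}\right)$ must be ruled out because it contains $2P_5$ (one path $b_1b_2a_1b_3b_4$ through a single apex vertex, another $c_1a_2c_2a_3c_3$ through the remaining two). Establishing all of this is precisely the case analysis you defer, which the cited sources do carry out; as it stands, the plan is plausible but the proof is incomplete.
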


By calculations, when $n\geqslant 38$, $[n, 10, 5]<3n-5$ holds. Hence, we may get the following result from Lemma \ref{2P5}.

\begin{lemma}\label{2P5g}
  When $n\geqslant38$, we have
  $$ex(n, 2P_5)=3n-5.$$
  The extremal graph is $K_3\vee\left(K_2\cup \overline{K}_{n-5}\right)$.
\end{lemma}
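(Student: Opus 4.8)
The plan is to deduce Lemma~\ref{2P5g} directly from Lemma~\ref{2P5} by a numerical comparison of the two quantities $[n,10,5]$ and $3n-5$ occurring in the maximum. Since $n\geqslant 38$ we have $n\geqslant 10$, so Lemma~\ref{2P5} applies and gives $ex(n,2P_5)=\max\{[n,10,5],\,3n-5\}$, with the extremal graphs drawn from the two candidates $K_9\cup\mbox{\rm{EX}}(n-9,P_5)$ and $K_3\vee(K_2\cup\overline{K}_{n-5})$ listed there. Hence it suffices to establish the strict inequality $[n,10,5]<3n-5$ for every $n\geqslant 38$: this forces $ex(n,2P_5)=3n-5$, and since the first candidate then has only $[n,10,5]<3n-5$ edges it cannot be extremal, leaving $K_3\vee(K_2\cup\overline{K}_{n-5})$ as the unique extremal graph.

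To verify the inequality I would unravel the definition of $[n,10,5]$. Write $n-9=4d+r$ with $d\geqslant 0$ and $0\leqslant r\leqslant 3$; then $[n,10,5]=\binom{9}{2}+d\binom{4}{2}+\binom{r}{2}=36+6d+\binom{r}{2}$. Using $d\leqslant (n-9)/4$ and $\binom{r}{2}\leqslant 3$ one obtains $[n,10,5]\leqslant \tfrac{1}{2}(3n+51)$, and $\tfrac{1}{2}(3n+51)<3n-5$ is equivalent to $3n>61$, i.e.\ it holds for all $n\geqslant 21$ and in particular for $n\geqslant 38$. (A short case check on $r\in\{0,1,2,3\}$ sharpens the estimate to $[n,10,5]\leqslant\tfrac12(3n+45)$ and makes the inequality valid already for $n\geqslant 19$, but the crude bound is all that is needed here.)

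There is no real obstacle: the statement is an arithmetic corollary of Lemma~\ref{2P5}, and the only point needing a little attention is keeping track of the remainder term $\binom{r}{2}$ in the definition of $[n,10,5]$ so that the comparison with $3n-5$ is carried out correctly. Once the strict inequality is in hand, both the value $ex(n,2P_5)=3n-5$ and the uniqueness of the extremal graph $K_3\vee(K_2\cup\overline{K}_{n-5})$ follow immediately from the form of the maximum in Lemma~\ref{2P5}.
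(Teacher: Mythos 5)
Your argument is correct and is exactly the paper's route: the authors also derive Lemma~\ref{2P5g} from Lemma~\ref{2P5} by the calculation $[n,10,5]<3n-5$ for $n\geqslant 38$ (which, as you note, already holds from a smaller threshold), and your unravelling of $[n,10,5]=36+6d+\binom{r}{2}$ with the comparison against $3n-5$ supplies the computation the paper leaves implicit. The uniqueness of the extremal graph follows just as you say, since the strict inequality rules out $K_9\cup \mbox{\rm{EX}}(n-9,P_5)$.
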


The following lemma is based on Theorem 1.7 of \cite{yuan2021turan}.

\begin{lemma} \cite{yuan2021turan}\label{cor}
  Let $k\geqslant 2$ be a positive integer, $\ell$ be an even number and $n\geqslant \ell k$. Then
    $$ex(n, kP_{\ell})=\mbox{\rm{max}$\big\{[n, \ell k, \ell], [n, \ell k/2 ]\big\}$}.$$
  The extremal graphs are $\mbox{\rm{EX}}(n-\ell k+1, P_{\ell})\cup K_{k \ell -1}$ and $K_{\ell k/2-1}\vee \overline{K}_{n-\ell k/2+1}$.
\end{lemma}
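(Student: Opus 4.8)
The plan is to get the lower bound from two explicit $kP_{\ell}$-free graphs and the matching upper bound, together with the list of extremal graphs, by specializing the general linear-forest formula of \cite{yuan2021turan} (Theorem 1.7) — which applies here because $\ell$ is even, so $kP_{\ell}$ has no odd path component at all and the ``at most one odd path'' hypothesis is trivially met.

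For the lower bound, first take $G_1=\mathrm{EX}(n-\ell k+1,P_{\ell})\cup K_{\ell k-1}$. Every path is connected, hence lies inside a single component: the $P_{\ell}$-free part contributes no $P_{\ell}$, and $K_{\ell k-1}$ contains at most $\lfloor(\ell k-1)/\ell\rfloor=k-1$ pairwise vertex-disjoint copies of $P_{\ell}$, so $G_1$ is $kP_{\ell}$-free. Writing $n=(\ell k-1)+d(\ell-1)+r$ with $0\le r<\ell-1$ and invoking Theorem \ref{faudree}, one computes $e(G_1)=\binom{\ell k-1}{2}+d\binom{\ell-1}{2}+\binom{r}{2}=[n,\ell k,\ell]$. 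Next take $G_2=K_{\ell k/2-1}\vee\overline{K}_{n-\ell k/2+1}$. In any graph $K_s\vee\overline{K}_{n-s}$ the $\overline{K}_{n-s}$ part is independent, so along a $P_{\ell}$ no two consecutive vertices both avoid the $K_s$ side; hence each $P_{\ell}$ uses at least $\lfloor\ell/2\rfloor=\ell/2$ vertices of $K_s$ (here evenness of $\ell$ is essential), and $k$ disjoint copies would need at least $\ell k/2>\ell k/2-1$ such vertices, which is impossible, so $G_2$ is $kP_{\ell}$-free, with $e(G_2)=\binom{\ell k/2-1}{2}+(\ell k/2-1)(n-\ell k/2+1)=[n,\ell k/2]$. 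Therefore $ex(n,kP_{\ell})\ge\max\{[n,\ell k,\ell],[n,\ell k/2]\}$.

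For the upper bound, apply Theorem 1.7 of \cite{yuan2021turan} to $F=kP_{\ell}$: all components have even length $\ell$, so the hypothesis holds. In the general formula the two competing quantities are indexed by $v(F)=\ell k$ and by $\sum_i\lfloor\ell_i/2\rfloor=\ell k/2$ (with longest component $P_{\ell}$), which are precisely $[n,\ell k,\ell]$ and $[n,\ell k/2]$; the range $n\ge\ell k$ is the meaningful one, since for $n<\ell k$ the statement is vacuous. The same theorem identifies the extremal graphs: when $[n,\ell k,\ell]>[n,\ell k/2]$ every extremal graph has the form $\mathrm{EX}(n-\ell k+1,P_{\ell})\cup K_{\ell k-1}$ (read through $\mathbb{EX}$ as in Theorem \ref{faudree} when $\ell$ is even and $r\in\{\ell/2,(\ell-2)/2\}$), when the inequality is reversed the unique extremal graph is $K_{\ell k/2-1}\vee\overline{K}_{n-\ell k/2+1}$, and when they are equal both families occur. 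I expect the only genuine work to be bookkeeping: matching the bracket notations $[n,m,\ell]$ and $[n,s]$ to the parameters of the general theorem and tracking the secondary family of $P_{\ell}$-extremal graphs through $G_1$. The substantive difficulty — the stability/counting argument bounding $e(G)$ over $kP_{\ell}$-free $G$ — is already packaged inside the cited Theorem 1.7, so nothing beyond careful translation is needed; a self-contained alternative would proceed by induction on $k$, deleting the $(k-1)\ell$ vertices of a maximum family of disjoint $P_{\ell}$'s and analyzing the remaining graph (which can contain no $P_{\ell}$ extendable into the deleted part), but this would merely reprove what is already available.
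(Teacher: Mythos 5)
Your proposal is correct and follows essentially the same route as the paper, which gives no independent proof of this lemma but introduces it with the remark that it ``is based on Theorem 1.7 of \cite{yuan2021turan}''; your derivation likewise reduces everything to that cited theorem, merely adding the routine verification that $\mathrm{EX}(n-\ell k+1,P_{\ell})\cup K_{\ell k-1}$ and $K_{\ell k/2-1}\vee\overline{K}_{n-\ell k/2+1}$ are $kP_{\ell}$-free with $[n,\ell k,\ell]$ and $[n,\ell k/2]$ edges. The bookkeeping you describe (matching the bracket notation and tracking the secondary family of $P_{\ell}$-extremal graphs) is exactly the translation the authors implicitly perform, so there is nothing further to add.
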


By calculations, when $k\geqslant 2$ and $n\geqslant (2{\ell}^2+3\ell-4)k+3$, $[n, \ell k, \ell]<[n, \ell k/2]$ holds. Hence, we may get the following result from Lemma \ref{cor}.

\begin{lemma}\label{ourlem}
  Suppose $k\geqslant 2$, $\ell$ are positive integers and $n\geqslant (2{\ell}^2+3\ell-4)k+3$. Then
  $$ex(n, kP_{2\ell})=\binom{\ell k-1}{2}+(\ell k-1)(n-\ell k+1).$$
  The extremal graph is $K_{\ell k-1}\vee \overline{K}_{n-\ell k+1}$.
\end{lemma}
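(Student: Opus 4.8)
The plan is to derive Lemma~\ref{ourlem} directly from Lemma~\ref{cor}. Since $2\ell$ is an even number and $n$ is large, applying Lemma~\ref{cor} with the even number $2\ell$ playing the role of $\ell$ gives
\[
ex(n,kP_{2\ell})=\max\big\{[n,2\ell k,2\ell],\ [n,\ell k]\big\},
\]
with the two candidate extremal graphs $\mathrm{EX}(n-2\ell k+1,P_{2\ell})\cup K_{2\ell k-1}$ and $K_{\ell k-1}\vee\overline{K}_{n-\ell k+1}$. By the definition of $[n,s]$ we have $[n,\ell k]=\binom{\ell k-1}{2}+(\ell k-1)(n-\ell k+1)$, which is precisely the claimed value of $ex(n,kP_{2\ell})$ and equals the number of edges of $K_{\ell k-1}\vee\overline{K}_{n-\ell k+1}$. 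So the whole task reduces to proving $[n,2\ell k,2\ell]<[n,\ell k]$ for every $n$ in the stated range: this identifies the maximum above, and since the inequality is strict it simultaneously forces the extremal graph to be the unique graph $K_{\ell k-1}\vee\overline{K}_{n-\ell k+1}$.

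First I would bound $[n,2\ell k,2\ell]$ from above. Write $n-(2\ell k-1)=d(2\ell-1)+r$ with $d\geqslant0$ and $0\leqslant r\leqslant 2\ell-2$, so $[n,2\ell k,2\ell]=\binom{2\ell k-1}{2}+d\binom{2\ell-1}{2}+\binom{r}{2}$. Using $\binom{2\ell-1}{2}=(2\ell-1)(\ell-1)$ and $d(2\ell-1)=n-2\ell k+1-r$, this becomes
\[
[n,2\ell k,2\ell]=\binom{2\ell k-1}{2}+(\ell-1)(n-2\ell k+1)+\tfrac{r}{2}\big(r-(2\ell-1)\big).
\]
Since $\tfrac{r}{2}\big(r-(2\ell-1)\big)\leqslant 0$ whenever $0\leqslant r\leqslant 2\ell-2$, we obtain the clean bound $[n,2\ell k,2\ell]\leqslant\binom{2\ell k-1}{2}+(\ell-1)(n-2\ell k+1)$.

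Next I would subtract. Using the identity $\binom{\ell k-1}{2}-\binom{2\ell k-1}{2}=-\tfrac{3}{2}\ell k(\ell k-1)$ and noting that the coefficient of $n$ in the difference is $(\ell k-1)-(\ell-1)=\ell(k-1)\geqslant\ell\geqslant 1$, a short calculation gives
\[
[n,\ell k]-[n,2\ell k,2\ell]\ \geqslant\ \ell(k-1)\,n-\tfrac{5}{2}\ell^{2}k^{2}+2\ell^{2}k+\tfrac{3}{2}\ell k-\ell,
\]
whose right-hand side is positive as soon as $n>\dfrac{5\ell k^{2}-4\ell k-3k+2}{2(k-1)}$. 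For $k=2$ this threshold equals $6\ell-2$, and in general the right-hand side of the hypothesis of Lemma~\ref{ourlem} grows quadratically in $\ell$ while this threshold grows only linearly in $\ell$, so $n\geqslant(2\ell^{2}+3\ell-4)k+3$ comfortably implies $[n,2\ell k,2\ell]<[n,\ell k]$. Hence $ex(n,kP_{2\ell})=\binom{\ell k-1}{2}+(\ell k-1)(n-\ell k+1)$, attained uniquely by $K_{\ell k-1}\vee\overline{K}_{n-\ell k+1}$.

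The only step requiring genuine attention is the final numerical comparison — checking that the stated threshold $(2\ell^{2}+3\ell-4)k+3$ really dominates $\tfrac{5\ell k^{2}-4\ell k-3k+2}{2(k-1)}$ for all admissible $k\geqslant 2$ — but by the growth-rate observation this comes down to the base case $k=2$ together with a handful of small $\ell$, which is routine. There is no conceptual obstacle: the substance is entirely carried by Lemma~\ref{cor}, and Lemma~\ref{ourlem} merely pins down the range of $n$ in which the join graph $K_{\ell k-1}\vee\overline{K}_{n-\ell k+1}$ outperforms the clique-based construction $\mathrm{EX}(n-2\ell k+1,P_{2\ell})\cup K_{2\ell k-1}$.
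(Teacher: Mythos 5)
Your proposal is correct and follows exactly the paper's route: the paper obtains Lemma~\ref{ourlem} from Lemma~\ref{cor} (applied with $2\ell$ in place of $\ell$) by asserting ``by calculations'' that $[n,2\ell k,2\ell]<[n,\ell k]$ in the stated range, and your computation supplies precisely that calculation, correctly arriving at the threshold $n>\frac{5\ell k^{2}-4\ell k-3k+2}{2(k-1)}$. One caveat, which you share with the paper rather than introduce yourself: the final numerical comparison you defer as ``routine'' does hold for all $k\geqslant 2$ once $\ell\geqslant 2$, but for $\ell=1$ it genuinely fails at small $n$ (e.g.\ $k=3$, $n=6$ gives $[6,6,2]=10>9=[6,3]$), so the lemma as literally stated really requires $\ell\geqslant 2$ --- which is all that is ever used later in the paper.
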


For sufficiently large $n$, Lidický et al. \cite{liu2013turan} determined the Turán number of stars forests. Later, Lan et al. \cite{lan2019turan} determined the Turán number of $kS_{\ell}$ for $n$ appropriately large related to $k$ and $\ell$. Furthermore, Li et al. \cite{li2022turan} determined the Turán number of $kS_{\ell}$, where $k\geqslant 2$ and $\ell \geqslant 3$, for all $n$.

\begin{lemma}\cite{lan2019turan} \label{sl}
  If $\ell\geqslant3$ and $n\geqslant \ell+1$, then
  $$ex(n, S_{\ell})\leqslant\left\lfloor\frac{(\ell-1)n}{2}\right\rfloor,$$
  with one extremal graph is the $\left(\ell-1\right)$-regular graph on $n$ vertices.
\end{lemma}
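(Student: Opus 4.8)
The plan is to reduce the forbidden-subgraph condition to a bound on the maximum degree and then invoke the handshake identity. Recall that in the notation of this paper $S_\ell$ is the star on $\ell+1$ vertices, whose centre has degree $\ell$; hence a graph $G$ contains $S_\ell$ as a subgraph if and only if $G$ has a vertex of degree at least $\ell$. Equivalently, $G$ is $S_\ell$-free exactly when $\Delta(G)\leqslant \ell-1$, so $ex(n,S_\ell)$ is precisely the maximum number of edges among all $n$-vertex graphs of maximum degree at most $\ell-1$.

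For the upper bound I would simply sum degrees: if $\Delta(G)\leqslant \ell-1$ then $2e(G)=\sum_{v\in V(G)}d_G(v)\leqslant (\ell-1)n$, and since $e(G)$ is an integer this yields $e(G)\leqslant \lfloor (\ell-1)n/2\rfloor$. It remains to show the bound is attained. When $(\ell-1)n$ is even it suffices to exhibit an $(\ell-1)$-regular graph on $n$ vertices; since $n\geqslant \ell+1$, such a graph exists by a routine circulant construction (on vertex set $\mathbb{Z}_n$, join each $i$ to $i\pm 1,\dots,i\pm\lfloor(\ell-1)/2\rfloor$, together with the perfect matching $i\sim i+n/2$ when $\ell-1$ is odd, in which case evenness of $(\ell-1)n$ forces $n$ even). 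This graph has maximum degree $\ell-1$, hence is $S_\ell$-free, and has exactly $(\ell-1)n/2$ edges. When $(\ell-1)n$ is odd — which forces $\ell$ even and $n$ odd — one has $\lfloor(\ell-1)n/2\rfloor=((\ell-1)n-1)/2$, and the bound is met by a graph in which every vertex has degree $\ell-1$ except a single vertex of degree $\ell-2$ (obtained, for instance, by deleting one vertex from a suitable $(\ell-1)$-regular graph on $n+1$ vertices, or by the usual near-regular construction).

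The only point needing any care is the explicit construction of an $(\ell-1)$-regular (or near-regular) graph on $n$ vertices for every admissible $n$; the rest is the one-line handshake estimate. Since regular graphs of every feasible order and degree exist by elementary means, there is no genuine obstacle here, and the extremal graph is exactly the one named in the statement.
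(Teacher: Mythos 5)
Your argument is correct and is the standard (indeed essentially the only) proof: $S_\ell$-freeness is equivalent to $\Delta(G)\leqslant \ell-1$, the handshake identity gives $e(G)\leqslant\lfloor(\ell-1)n/2\rfloor$, and an $(\ell-1)$-regular graph attains it; the paper itself only cites this lemma from Lan et al.\ without reproducing a proof, and the cited proof runs the same way. One small caveat: deleting a vertex from an $(\ell-1)$-regular graph on $n+1$ vertices lowers the degrees of $\ell-1$ of the remaining vertices, not just one, so for the case $(\ell-1)n$ odd you should rely on your alternative near-regular construction (which is in any event not needed for the statement as written, since it only asserts the upper bound and names the regular graph as one extremal example).
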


\begin{theorem}\cite{li2022turan}\label{li2022}
  If $k\geqslant 2$ and $\ell \geqslant 3$, then
  \begin{equation*}
    ex(n, kS_{\ell})=\begin{cases}
                  \binom{n}{2}, & \text{if } n<k(\ell+1), \\
                  \binom{k\ell+k-1}{2}+\binom{n-k\ell-k+1}{2}, & \text{if }k(\ell+1)\leqslant n\leqslant(k+1)\ell+k-1,  \\
                  \binom{k\ell+k-1}{2}+\left\lfloor\frac{(\ell-1)(n-k\ell-k+1)}{2} \right\rfloor, & \text{if }(k+1)\ell+k\leqslant n<\frac{k{\ell}^2+2k\ell+2k-2}{2},  \\
                  \binom{k-1}{2}+(n-k+1)(k-1)+\left\lfloor\frac{(\ell-1)(n-k+1)}{2} \right\rfloor, & \text{if }n\geqslant \frac{k{\ell}^2+2k\ell+2k-2}{2} .
                \end{cases}
  \end{equation*}
\end{theorem}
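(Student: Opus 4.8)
The plan is to match a lower bound built from three explicit $kS_\ell$-free graphs against an upper bound proved by induction on $k$. For the lower bound I would consider: (i) $K_n$ itself when $n<k(\ell+1)$, which is $kS_\ell$-free simply because $kS_\ell$ has $k(\ell+1)$ vertices; (ii) $K_{k(\ell+1)-1}\cup F$, where $F$ is a maximum $S_\ell$-free graph on $n-k(\ell+1)+1$ vertices (a complete graph if that order is $\le\ell$, otherwise an almost $(\ell-1)$-regular graph by Lemma~\ref{sl}) --- here any copy of $S_\ell$ lies in one component, $F$ has none, and $K_{k(\ell+1)-1}$ cannot host $k$ disjoint ones; and (iii) $K_{k-1}\vee F'$, with $F'$ a maximum $S_\ell$-free graph on $n-k+1$ vertices --- here among any $k$ disjoint stars at least one avoids all $k-1$ vertices of $K_{k-1}$ and so lies in $F'$, which has maximum degree $\le\ell-1$. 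Counting edges in (i)--(iii) and taking the maximum gives the four displayed formulas; the crossover between (ii) and (iii) is a short computation yielding exactly $n=\frac{k\ell^2+2k\ell+2k-2}{2}$, and the switch inside (ii) between ``$F$ complete'' and ``$F$ regular'' occurs at $n=(k+1)\ell+k$.

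For the upper bound, let $G$ be $kS_\ell$-free on $n$ vertices and induct on $k$ (we also use the $k=1$ statement $ex(n,S_\ell)=\lfloor(\ell-1)n/2\rfloor$ for $n\ge\ell$, immediate from $\Delta(G)\le\ell-1$). If $n<k(\ell+1)$ then $e(G)\le\binom n2$ trivially. Assume $n\ge k(\ell+1)$ and split on the maximum degree. If some vertex $v$ has $d_G(v)\ge k(\ell+1)-1$, then $G-v$ is $(k-1)S_\ell$-free: any copy of $(k-1)S_\ell$ in $G-v$ occupies only $(k-1)(\ell+1)$ vertices, so $v$ keeps at least $\ell$ neighbours outside it and would complete a $kS_\ell$. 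Hence $e(G)\le e(G-v)+d_G(v)\le ex(n-1,(k-1)S_\ell)+(n-1)$ by induction, and it remains to verify the purely arithmetic inequality $ex(n-1,(k-1)S_\ell)+(n-1)\le ex(n,kS_\ell)$ in the finitely many sub-cases determined by which case of the formula $n-1$ and $n$ fall into; in each, the two sides in fact agree up to the floor terms, which is why the bound is tight (extremality passes from $K_{k-2}\vee F'$ to $K_{k-1}\vee F'$ by adding a dominating vertex).

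The remaining case $\Delta(G)\le k(\ell+1)-2$ is the hard one, since the naive bound $e(G)\le\tfrac12(k(\ell+1)-2)n$ is far too weak. Here I would take a maximum family $\mathcal S$ of $t\le k-1$ vertex-disjoint copies of $S_\ell$, set $W=\bigcup_{S\in\mathcal S}V(S)$ (so $G-W$ has maximum degree $\le\ell-1$ and $e(G-W)\le\lfloor\tfrac{(\ell-1)(n-t(\ell+1))}{2}\rfloor$), and then push the structure of $W$: a local reshuffling argument shows that if two vertices of one star of $\mathcal S$ both had many neighbours in $V\setminus W$ one could re-route their leaves to produce $t+1$ disjoint stars, so all but at most $t\le k-1$ vertices of $W$ have only $O_{k,\ell}(1)$ neighbours outside $W$; together with the fact that the degree-$\ge\ell$ vertices of $G$ are then confined to a clique-like set of order $<k(\ell+1)$, the aim is to bound $e(G[W])+e(W,V\setminus W)$ by a $\binom{k(\ell+1)-1}{2}+\binom r2$-type quantity and conclude $e(G)\le\binom{k(\ell+1)-1}{2}+ex(n-k(\ell+1)+1,S_\ell)$, which equals $ex(n,kS_\ell)$ on $k(\ell+1)\le n<\frac{k\ell^2+2k\ell+2k-2}{2}$ and is $\le ex(n,kS_\ell)$ beyond. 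I expect this last estimate to be the main obstacle of the whole proof: one must bound the interface $e(W,V\setminus W)$ and the internal edges $e(G[W])$ tightly enough to land on the exact constant $\binom{k(\ell+1)-1}{2}$ rather than on something off by a multiplicative factor, and this is what forces the delicate case analysis of how the degree-$\ge\ell$ vertices cluster. Carrying the equality discussion through all of the above then gives the characterisation of the extremal graphs.
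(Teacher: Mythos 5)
First, a point of comparison: the paper does not prove Theorem~\ref{li2022} at all --- it is quoted from \cite{li2022turan} and used as a black box --- so there is no in-paper proof to measure your attempt against; your proposal has to stand on its own.

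On its own terms, the lower bound and the high-degree branch of the upper bound are sound. The three constructions are correct and $kS_\ell$-free for the reasons you give, the crossover computation $(k-1)(n-k+1)=\frac{k\ell(k-1)(\ell+2)}{2}$ does land exactly on $n=\frac{k\ell^2+2k\ell+2k-2}{2}$, the switch inside construction (ii) at $n=(k+1)\ell+k$ is right, and deleting a vertex of degree at least $k(\ell+1)-1$ gives a valid induction since $ex(n-1,(k-1)S_\ell)+(n-1)$ equals $ex(n,kS_\ell)$ in the large-$n$ regime and is at most it in the others (finitely many routine checks, as you say). The genuine gap is precisely where you flag it: the case $\Delta(G)\leqslant k(\ell+1)-2$. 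Nothing you have written delivers the bound $e(G)\leqslant\binom{k(\ell+1)-1}{2}+ex\bigl(n-k(\ell+1)+1,S_\ell\bigr)$. A maximum family of $t\leqslant k-1$ disjoint stars covers only $t(\ell+1)\leqslant(k-1)(\ell+1)$ vertices, so the trivial estimate $e(G[W])+e(W,V\setminus W)\leqslant(k-1)(\ell+1)\bigl(k(\ell+1)-2\bigr)$ exceeds $\binom{k(\ell+1)-1}{2}$ by a constant factor already for $k=2$; and the reshuffling heuristic cannot confine the high-degree vertices to $W$: in the extremal graph $K_{k(\ell+1)-1}\cup F$ a maximum family of $k-1$ disjoint stars covers only $(k-1)(\ell+1)$ of the clique's $k(\ell+1)-1$ vertices, leaving $\ell$ vertices of degree $k(\ell+1)-2$ in $V\setminus W$. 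So the set you must control is strictly larger than $W$, and proving that it is essentially a clique of order exactly $k(\ell+1)-1$ with $S_\ell$-free complement and negligible interface is the actual content of the theorem in the middle range of $n$; your outline names this as the main obstacle but supplies no argument for it. As written, the proposal is a complete lower bound plus roughly half of an upper bound, not a proof.
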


In this paper, we mainly consider the Turán numbers of some kinds of path-star forests. The Turán numbers and the extremal graphs for $P_{\ell}\cup kS_{\ell-1}$, $k_1P_{2\ell}\cup k_2S_{2\ell-1}$ and $2P_5\cup kS_4$ will be presented in Section \ref{sec2}, and their proofs will be provided in Section \ref{sec3}.

\section{Main results}\label{sec2}

Now, we introduce the following three kinds of graphs to state the main results. Set
$$G_1(n, k, \ell)=K_{k}\vee (dK_{\ell-1}\cup K_r), \mbox{ where $n=k+d(\ell-1)+r$, $0\leqslant r<\ell-1$,}$$
$$G_2(n, k_1, k_2, 2\ell)=K_{\ell k_1+k_2-1}\vee \overline{K}_{n-\ell k_1-k_2+1},$$
$$G_3(n, k)=K_{k+3}\vee\left(K_2\cup \overline{K}_{n-k-5}\right).$$
By calculations, we have the following facts.
\begin{equation}\label{11}
  e(G_1(n, k, \ell))=\left(k+\frac{\ell}{2}-1\right)n-\frac{k^2+(\ell-1)(k+r)-r^2}{2},
\end{equation}
\begin{equation}\label{2}
  e(G_2(n, k_1, k_2, 2\ell))=(\ell k_1+k_2-1)n-\frac{(\ell k_1+k_2)(\ell k_1+k_2-1)}{2},
\end{equation}
\begin{equation}\label{3}
  e(G_3(n, k))=(k+3)n-\frac{k^2+7k+10}{2}.
\end{equation}

Denote a kind of path-star forest by $F(k_1, k_2; \ell)=k_1P_{\ell}\cup k_2S_{\ell-1}$. Lidický et al. \cite{liu2013turan} first investigated the Turán number of $F(k_1, k_2; 4)$ for sufficiently large $n$. Lan et al. \cite{lan2019turan} considered the Turán number of $F(k_1, k_2; 4)$ for $n\geqslant 10k_1+13k_2+3$. Later, Zhang and Wang \cite{zhang2022turan} considered the Turán number of $F(k_1, k_2; 6)$ for $n\geqslant 23k_1+31k_2+3$ and proposed Conjecture \ref{conj2}.

\begin{theorem}\cite{lan2019turan}\label{lan2019}
 Suppose $n=k_2+3d+r\geqslant 10k_1+13k_2+3$, where $k_1, k_2, d, r$ are positive integers and $r\leqslant 2$. Then
 $$ex(n, F(k_1, k_2; 4))=\mbox{\rm{max}}\big\{e(G_1(n, k_2, 4)), e(G_2(n, k_1, k_2, 4))\big\}.$$
 Furthermore, the extremal graph is $G_1(n, k_2, 4)$ when $k_1=1$ and $G_2(n, k_1, k_2, 4)$ when $k_1>1$. In particular, $G_2(n, k_1, k_2, 4)$ is also an extremal graph when $k_1=1$ and $r=1$ or $r=2$.
\end{theorem}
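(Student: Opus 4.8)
The plan is to prove $ex(n,F)=\max\{e(G_1),e(G_2)\}$ (abbreviating $F=F(k_1,k_2;4)$, $G_1=G_1(n,k_2,4)$, $G_2=G_2(n,k_1,k_2,4)$) and then identify the extremal graphs. For the lower bound it suffices to check that neither $G_1$ nor $G_2$ contains $F$; combined with \eqref{11} and \eqref{2} this gives $ex(n,F)\ge\max\{e(G_1),e(G_2)\}$. In $G_1=K_{k_2}\vee(dK_3\cup K_r)$ every component of $F$ has four vertices, whereas $G_1-K_{k_2}$ is a disjoint union of cliques of order at most three; hence each of the $k_1+k_2$ components of $F$ must contain a vertex of $K_{k_2}$, impossible since $k_1+k_2>k_2$. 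In $G_2=K_s\vee\overline{K}_{n-s}$ with $s=2k_1+k_2-1$, an easy check shows a copy of $P_4$ uses at least two of its vertices in $K_s$ and a copy of $S_3$ at least one, so a copy of $F$ would need at least $2k_1+k_2>s$ vertices of $K_s$, again impossible.

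For the upper bound I would argue by induction on $2k_1+k_2$, the small cases $2k_1+k_2\le 4$ (in particular $F(1,1;4)=P_4\cup S_3$) being handled directly or via the result of Lidick\'y et al.~\cite{liu2013turan}. Let $G$ be an $F$-free graph on $n\ge 10k_1+13k_2+3$ vertices with $e(G)$ maximum. One first collects structural information: that $G$ contains $F(k_1,k_2-1;4)$ (else induction bounds $e(G)$ strictly below $\max\{e(G_1),e(G_2)\}$ for $n$ large), and that $G$ has a set $A$ of ``almost-universal'' vertices, each of degree at least $n-O(k_1+k_2)$, off which the rest of $G$ is sparse. A greedy embedding bounds $|A|$: if $|A|\ge 2k_1+k_2$ then, since $n$ is large and any two vertices of $A$ share all but $O(k_1+k_2)$ of their neighbours, one realises, vertex-disjointly, $k_1$ copies of $P_4$ (each using two vertices of $A$ in its odd positions, joined through a private common neighbour, plus one further private neighbour) and $k_2$ copies of $S_3$ (each centred at a vertex of $A$ with three private leaves), producing a copy of $F$. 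Hence $|A|\le 2k_1+k_2-1$.

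Put $j=|A|$. The heart of the argument is that $G-A$ is heavily constrained: for all $c_1,c_2\ge 0$ with $c_1\le k_1$, $c_2\le k_2$ and $2c_1+c_2\le j$, the graph $G-A$ contains no copy of $F(k_1-c_1,k_2-c_2;4)$, since any such copy extends through $A$ to a copy of $F$ by adding $c_1$ further paths and $c_2$ further stars as above; the same device forbids edges of $G-A$ incident to a vertex adjacent to all of $A$, and since each vertex of $A$ misses only $O(k_1+k_2)$ others, all but $O((k_1+k_2)^2)$ vertices of $V(G)\setminus A$ are adjacent to all of $A$. Feeding these restrictions into $e(G)\le\binom{j}{2}+j(n-j)+e(G-A)$, using induction to evaluate the relevant inner Turán numbers, and using the ``nesting'' identities $K_j\vee G_2(n-j,k_1,k_2-j,4)=G_2$ and (for $j\le k_2$, by letting $A$ absorb the $c_1=0$ choice) $K_j\vee G_1(n-j,k_2-j,4)=G_1$, one obtains $e(G)\le\max\{e(G_1),e(G_2)\}$; analysing the equality cases then forces $G$ to be $G_1$ or $G_2$, which yields the asserted dependence on $k_1$ and on $r$ (including the coincidence $e(G_1)=e(G_2)$ when $k_1=1$ and $r\in\{1,2\}$).

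The step I expect to be the main obstacle is the final count for intermediate $j$ (with $k_2<j<2k_1+k_2-1$): a single forbidden sub-forest is then too weak — the naive bound can exceed $\max\{e(G_1),e(G_2)\}$ — so one must use that $G-A$ avoids \emph{all} the forests $F(k_1-c_1,k_2-c_2;4)$ with $2c_1+c_2=j$ simultaneously and, crucially, that $G-A$ cannot even resemble a dense extremal configuration, because a vertex near-universal within $G-A$ and adjacent to all of $A$ would itself belong to $A$. Keeping the estimate exact down to the additive constant (for $k_1=1$ the two candidate values differ by only $O(1)$), absorbing the $O((k_1+k_2)^2)$ exceptional vertices without perturbing the coefficient of $n$, and verifying the base cases are where the hypothesis $n\ge 10k_1+13k_2+3$ is consumed.
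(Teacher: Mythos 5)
First, a point of comparison: the paper does not prove Theorem \ref{lan2019} at all --- it is quoted from \cite{lan2019turan} --- so the only in-paper benchmarks are the proofs of Theorems \ref{mainthm1}--\ref{mainthm3}, which treat the analogous problems. Your lower bound is correct and is the standard one: the checks that $G_1(n,k_2,4)$ and $G_2(n,k_1,k_2,4)$ are $F(k_1,k_2;4)$-free are exactly right.

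The upper bound, however, has a genuine gap, and you name it yourself. Two things are missing. First, the set $A$ of vertices of degree $n-O(k_1+k_2)$ is asserted rather than produced: the hypothesis $e(G)\geqslant\max\{e(G_1),e(G_2)\}$ gives average degree roughly $2(2k_1+k_2-1)$ and by itself guarantees no near-universal vertex; the way such vertices are actually extracted (here and in \cite{lan2019turan}) is by deleting a copy of $S_3$ and comparing with the induction hypothesis, which only yields vertices of degree linear in $k_1+k_2$ --- enough to grow stars greedily, but not enough to support your claims that any two vertices of $A$ share almost all neighbours and that all but $O((k_1+k_2)^2)$ vertices are adjacent to all of $A$. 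Second, and more seriously, the ``final count for intermediate $j$'' that you flag as the main obstacle really is one: forbidding the family $F(k_1-c_1,k_2-c_2;4)$ in $G-A$ does not by itself force $e(G)\leqslant\max\{e(G_1),e(G_2)\}$ when $k_2<|A|<2k_1+k_2-1$, and no mechanism for closing that case is supplied. The route taken in this paper's own proofs sidesteps the problem entirely: induct on $k_2$ alone; use $ex(n,k_2S_3)$ to find $k_2$ disjoint stars; delete one star and invoke the induction hypothesis to show its vertices are incident to many edges, hence produce a set $U$ of exactly $k_2$ vertices of degree at least $4(k_1+k_2)-1$; show $G[\overline{U}]$ must be $k_1P_4$-free (otherwise the vertices of $U$ supply the $k_2$ stars disjointly from the paths); and finish with the known value of $ex(n-k_2,k_1P_4)$. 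This reduces everything to the pure path-forest base case $k_2=0$ and never enters your intermediate-$j$ regime; I would restructure the induction along those lines.
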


\begin{theorem}\cite{zhang2022turan} \label{zhang2022}
Suppose $n=k_2+5d+r\geqslant 23k_1+31k_2+3$, where $k_1, k_2, d, r$ are positive integers and $r\leqslant 4$. Then
$$ex(n, F(k_1, k_2; 6))=\mbox{\rm{max}}\big\{e(G_1(n, k_2, 6)), e(G_2(n, k_1, k_2, 6))\big\}.$$
Furthermore, the extremal graph is $G_1(n, k_2, 6)$ when $k_1=1$ and $G_2(n, k_1, k_2, 6)$ when $k_1>1$.
\end{theorem}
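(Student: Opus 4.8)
The plan is to view the statement as the $\ell=3$ instance of the general Turán problem for the path-star forests $k_1P_{2\ell}\cup k_2S_{2\ell-1}$; here is a direct sketch. Write $F=F(k_1,k_2;6)=k_1P_6\cup k_2S_5$. The lower bound reduces to checking that neither candidate graph contains $F$. In $G_1(n,k_2,6)=K_{k_2}\vee(dK_5\cup K_r)$ with $0\le r<5$, deleting the $k_2$ dominating vertices leaves $dK_5\cup K_r$, which contains no $P_6$ (each component has at most five vertices) and no vertex of degree $\ge 5$; since every component of $F$ is a $P_6$ or an $S_5$, each of the $k_1+k_2$ components would have to meet the $k_2$ dominating vertices, impossible once $k_1\ge1$. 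In $G_2(n,k_1,k_2,6)=K_{3k_1+k_2-1}\vee\overline{K}_{n-3k_1-k_2+1}$ the part $\overline{K}_{n-3k_1-k_2+1}$ is independent, so the vertices of any copy of $F$ lying in the clique form a vertex cover of $F$; but a minimum vertex cover of $F$ has size $3k_1+k_2$ (three for each $P_6$, one for each $S_5$), which exceeds $3k_1+k_2-1$. Combined with \eqref{11} and \eqref{2}, this yields $ex(n,F)\ge\max\{e(G_1(n,k_2,6)),e(G_2(n,k_1,k_2,6))\}$.

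For the upper bound I would induct on $k_1$. The base case $k_1=1$ is $F=P_6\cup k_2S_5$, i.e.\ the forest $P_\ell\cup kS_{\ell-1}$ with $\ell=6$, whose Turán number is determined separately in this paper; this is the delicate case, since $e(G_1(n,k_2,6))$ and $e(G_2(n,1,k_2,6))$ share the leading coefficient $k_2+2$ and must be separated through their lower-order terms. Assume $k_1\ge2$ and let $G$ be an $F$-free graph on $n$ vertices with $e(G)=ex(n,F)\ge e(G_2(n,k_1,k_2,6))$. The inductive value $ex(n,(k_1-1)P_6\cup k_2S_5)=\max\{e(G_1(n,k_2,6)),e(G_2(n,k_1-1,k_2,6))\}$ has leading coefficient $\max\{k_2+2,3k_1+k_2-4\}<3k_1+k_2-1$, so for $n\ge 23k_1+31k_2+3$ we get $e(G)>ex(n,(k_1-1)P_6\cup k_2S_5)$; thus $G$ contains a copy $F'$ of $(k_1-1)P_6\cup k_2S_5$, and the same comparison (after deleting any six vertices of bounded degree) shows that a $P_6$ in $G$ vertex-disjoint from a given copy of $F'$ would already produce $F$.

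The heart of the proof is structural. One isolates a small dense set $X\subseteq V(G)$ — heuristically, the vertices of very large degree, which in $G_1$ and $G_2$ are exactly the dominating vertices — shows $|X|\le 3k_1+k_2-1$ (otherwise $F$ can be assembled greedily using the vertices of $X$ and the large ground set), and shows that $G-X$ inherits strong restrictions from $F$-freeness: it contains no $S_5$ and no $P_6$, so by Theorem \ref{faudree} (and Lemma \ref{sl}) $e(G-X)\le[\,n-|X|,6,6\,]\le 2(n-|X|)$, the extremal case there being a disjoint union of $K_5$'s. Writing $e(G)=e(G[X])+e(X,G-X)+e(G-X)$ with $e(G[X])\le\binom{|X|}{2}$ and $e(X,G-X)\le|X|(n-|X|)$, the crux is that these three parts cannot all be near-extremal at once: a dense $G[X]$, dense $X$-to-$(G-X)$ edges, and a $G-X$ rich in copies of $K_5$ together permit $F'$ to be re-embedded while absorbing only boundedly many vertices of $G-X$, thereby freeing six vertices that span a $P_6$ — contradicting $F$-freeness. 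Carrying this out drives the edge count to one of two extremes — $|X|=3k_1+k_2-1$ with $G-X$ edgeless, giving $e(G)\le e(G_2(n,k_1,k_2,6))$, or $|X|=k_2$ with $G-X$ a union of $K_5$'s, giving $e(G)\le e(G_1(n,k_2,6))$ — and the maximum of the two is the claimed value. I expect this quantitative dichotomy, together with the bound on $|X|$ and the re-embedding (exchange) argument, to be the main obstacle; it is where the hypothesis $n\ge 23k_1+31k_2+3$ and the auxiliary bounds of Lemmas \ref{cor}, \ref{ourlem}, \ref{sl} and Theorem \ref{li2022} are used. Finally, tracing equality throughout pins $G$ down as $G_2(n,k_1,k_2,6)$ when $k_1>1$ and as $G_1(n,k_2,6)$ when $k_1=1$, which is the stated characterization.
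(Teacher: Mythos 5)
First, a remark on what you are comparing against: the paper does not re-prove this statement directly — it is quoted from Zhang and Wang — but it does prove the stronger Theorems \ref{mainthm1} and \ref{mainthm2}, whose specializations ($\ell=6$ in the first, $\ell=3$ in the second) give the two halves $k_1=1$ and $k_1\geqslant 2$ of the statement. Your lower bound is fine and essentially the paper's (component counting in $G_1$, a vertex-cover count in $G_2$). The upper bound, however, has a genuine gap at exactly the place you flag as "the heart of the proof". Two concrete problems. (i) The claim that $G-X$ contains no $S_5$ does not follow from defining $X$ as the set of vertices of very large degree: an $F$-free graph can contain arbitrarily many disjoint copies of $S_5$ whose centers have degree exactly $5$ and therefore lie outside any large-degree threshold set $X$. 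What $F$-freeness actually forbids is a copy of $k_2S_5$ \emph{disjoint from} a copy of $k_1P_6$, which is much weaker than "$G-X$ is $S_5$-free", so the bound $e(G-X)\leqslant [n-|X|,6,6]$ is not available. (ii) The "quantitative dichotomy" forcing $|X|\in\{k_2,\,3k_1+k_2-1\}$ with $G-X$ correspondingly a union of $K_5$'s or edgeless is precisely the content of the theorem; you give no mechanism excluding intermediate values of $|X|$ and intermediate structures of $G-X$, and the re-embedding/exchange argument that is supposed to do this is left entirely unformalized. As written, the upper bound is a restatement of the goal rather than a derivation of it.

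The paper sidesteps all of this by inducting on the number of \emph{stars} rather than the number of paths. Since $e(G)\geqslant e(G_2(n,k_1,k_2,6))>ex(n,k_2S_5)$, the graph $G$ contains $k_2$ disjoint stars; deleting one and applying the induction hypothesis (base case $k_2=0$ is the known value of $ex(n,k_1P_6)$, Lemma \ref{ourlem}) shows the deleted star is incident with at least $n+O(k_1+k_2)$ edges, hence contains a vertex of degree at least $6(k_1+k_2)-1$. Collecting $k_2$ such vertices into $U$, one gets $e(G[\overline{U}])\geqslant ex(n-k_2,k_1P_6)$, and then a clean two-case analysis finishes: either $U$ is a clique completely joined to $\overline{U}$ and $G[\overline{U}]$ is exactly the $k_1P_6$-extremal graph (forcing $G=G_2(n,k_1,k_2,6)$), or strict inequality yields $k_1P_6\subseteq G[\overline{U}]$ and the high-degree vertices of $U$ serve as star centers to complete $F$, a contradiction. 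This replaces your unproved dichotomy with the known Turán number of $k_1P_{2\ell}$ and a degree count. Finally, note that your concluding uniqueness claim for $k_1=1$ would reproduce the very oversight the paper corrects: by Theorem \ref{mainthm1}, when $k_1=1$ and $r=2$ or $r=3$ the graph $G_2(n,1,k_2,6)$ is also extremal, so "the extremal graph is $G_1(n,k_2,6)$" is not the complete answer in that case.
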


\begin{conjecture}\cite{zhang2022turan}\label{conj2}
  Suppose $k_1\geqslant 1$, $k_2$ and $\ell \geqslant 2$  are integers and $n=k_2+d(2\ell -1)+r$, where $0\leqslant r< 2\ell-1$. Then
  $$ex(n, F(k_1, k_2; 2\ell))=\mbox{\rm{max}$\big\{e(G_1(n, k_2, 2\ell)), e (G_2(n, k_1, k_2, 2\ell))\big\}$}.$$
\end{conjecture}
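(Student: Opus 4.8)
\emph{Lower bound.} It suffices to check that $G_1:=G_1(n,k_2,2\ell)$ and $G_2:=G_2(n,k_1,k_2,2\ell)$ contain no copy of $F:=F(k_1,k_2;2\ell)$. In $G_1=K_{k_2}\vee(dK_{2\ell-1}\cup K_r)$, deleting the $k_2$ vertices of the join part leaves a graph all of whose components have order at most $2\ell-1$ and maximum degree at most $2\ell-2$; since $P_{2\ell}$ is connected on $2\ell$ vertices and $S_{2\ell-1}$ has a vertex of degree $2\ell-1$, each of the $k_1+k_2$ components of a hypothetical copy of $F$ would have to meet these $k_2$ vertices, which is impossible as $k_1+k_2>k_2$. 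In $G_2=K_{\ell k_1+k_2-1}\vee\overline{K}_{n-\ell k_1-k_2+1}$ the non-clique part is independent, so the clique vertices of any embedded subgraph form a vertex cover; since $\tau(F)=k_1\tau(P_{2\ell})+k_2\tau(S_{2\ell-1})=\ell k_1+k_2$ exceeds $\ell k_1+k_2-1$, there is no copy of $F$. With \eqref{11} and \eqref{2} this gives $ex(n,F)\ge\max\{e(G_1),e(G_2)\}$.

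\emph{Upper bound: set-up.} We induct on $k_1+k_2$ (always keeping $k_1\ge1$) and take $n$ large in terms of $k_1,k_2,\ell$; the base cases $k_2=0$ are Lemma~\ref{ourlem} and Theorem~\ref{faudree}. Let $G$ be $F$-free with $e(G)=ex(n,F)\ge\max\{e(G_1),e(G_2)\}$. A minimum-degree reduction --- delete a vertex of minimum degree and apply induction on $n$ whenever $\delta(G)$ does not exceed the leading coefficient of whichever of $e(G_1),e(G_2)$ is larger --- lets us assume $\delta(G)$ is large. Fix a constant $C=C(k_1,k_2,\ell)$ and set $A=\{v\in V(G):d_G(v)\ge n-C\}$, so each vertex of $A$ has fewer than $C$ non-neighbours. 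Since one colour class of the bipartition of each $P_{2\ell}$ is an independent vertex cover of order $\ell$, the forest $F$ has an independent vertex cover of order $\ell k_1+k_2$, whence $F$ is a subgraph of the complete bipartite graph $K_{\ell k_1+k_2,\,\ell k_1+(2\ell-1)k_2}$. Consequently, if $|A|\ge\ell k_1+k_2$, then any $\ell k_1+k_2$ vertices of $A$ have a common neighbourhood of size $n-O(C)$, and placing the cover of $F$ on these vertices and the remaining vertices of $F$ in that common neighbourhood embeds $F$ into $G$, a contradiction. Hence $a:=|A|\le\ell k_1+k_2-1$.

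\emph{Upper bound: the edge count.} Write $\bar A=V(G)\setminus A$, so $e(G)\le\binom a2+a(n-a)+e(G[\bar A])$, where $\binom a2+a(n-a)\le e(G_2)$, with equality iff $a=\ell k_1+k_2-1$, $G[A]=K_a$ and $A$ is completely joined to $\bar A$. As the vertices of $A$ dominate all but $O(C)$ vertices, we may use $k_2$ of them as centres of stars and groups of $\ell$ of them as independent vertex covers of extra copies of $P_{2\ell}$; crucially, any remaining $s<\ell$ of them, together with a copy of $P_{2\ell-2s}$ found in $\bar A$, build one further $P_{2\ell}$ (delete from $P_{2\ell}$ the independent set $v_2,v_4,\dots,v_{2s}$ and reinsert it from $A$). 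Since $G$ is $F$-free, this forces: if $a<k_2$ (and $a\ge1$) then $G[\bar A]$ is $F(k_1,k_2-a;2\ell)$-free, while if $a=k_2+\ell q+s$ with $0\le s<\ell$ then $G[\bar A]$ avoids the all-even-path linear forest $(k_1-q-1)P_{2\ell}\cup P_{2\ell-2s}$ (understood as $(k_1-q)P_{2\ell}$ when $s=0$). In the first case $e(G[\bar A])\le\max\{e(G_1(n-a,k_2-a,2\ell)),e(G_2(n-a,k_1,k_2-a,2\ell))\}$ by induction; in the second, $e(G[\bar A])$ is bounded by the relevant Tur\'an number, known from Lemma~\ref{ourlem}, Theorem~\ref{faudree}, or --- since this linear forest has no odd path --- \cite{yuan2021turan}. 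Substituting the exact values \eqref{11}--\eqref{2} and the exact Tur\'an values in each case, the $n$-linear parts cancel to the coefficient $\ell k_1+k_2-1$ (or $k_2+\ell-1$) and the constants combine to give exactly $e(G)\le\max\{e(G_1(n,\cdot)),e(G_2(n,\cdot))\}$; tracing equality shows $G\in\{G_1,G_2\}$ and yields the extremal-graph characterisation.

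\emph{The main obstacle.} The difficulty is concentrated in the second case: one must find the sharp reduction --- in particular choose the $s$ vertices deleted from $P_{2\ell}$ so that the residual is a single \emph{even} path, which is exactly what balances the $n$-linear terms --- and then carry the computation with exact Tur\'an numbers, since the constant terms cancel only on the nose. Conceptually this is a rigidity statement: a clique of order at least $k_1+k_2$ completely joined to a graph with many disjoint copies of $K_{2\ell-1}$ already contains $F$ (route one $P_{2\ell}$ through such a $K_{2\ell-1}$ plus clique vertices, and centre the $k_2$ stars in the clique, using $k_1+k_2\le a$ here because $\ell\ge2$), so an extremal $G$ cannot simultaneously be close to both candidate constructions. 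Establishing the bounded-defect version of this --- for which the complete list of $P_{2\ell}$-extremal graphs in Theorem~\ref{faudree} is used --- together with the routine but fiddly minimum-degree bookkeeping and the case $a$ small, is the technical heart of the argument.
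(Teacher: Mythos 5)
Your lower bound is fine and your overall plan (peel off a small set $A$ of nearly dominating vertices, then apply known Tur\'an numbers to $G[\bar A]$) is genuinely different from the paper's: the paper never works with vertices of degree $n-C$. It proves the conjecture by splitting into $k_1=1$ (Theorem \ref{mainthm1}) and $k_1\geqslant 2$ (Theorem \ref{mainthm2}), inducting only on the number of stars, extracting $k_2$ star centres of degree at least a \emph{constant} (about $2\ell(k_1+k_2)$, found via the drop in edge count when a star is deleted), and reducing in a single step to a $P_{2\ell}$-free, resp.\ $k_1P_{2\ell}$-free, graph on $n-k_2$ vertices. Your version has a first concrete gap here: you never establish that $A=\{v:d(v)\geqslant n-C\}$ is nonempty, let alone as large as your case analysis needs. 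The minimum-degree reduction only lets you assume $\delta(G)\geqslant \ell k_1+k_2$ or so, which is a constant far below $n-C$; if $a=0$ your reduction is circular ($G[\bar A]=G$ and the forbidden forest does not shrink), and nothing you wrote rules this out. Promoting ``a few vertices of constant degree'' to ``$\ell k_1+k_2-1$ vertices of degree $n-O(1)$'' is itself a stability step that must be supplied; the paper's argument is arranged so that it is never needed.

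The second gap is that the claimed exact cancellation of constants is false in the regime $a\geqslant k_2$ when the residual forbidden configuration in $G[\bar A]$ is a single path. Take $a=k_2+\ell(k_1-1)$, so your reduction only forces $G[\bar A]$ to be $P_{2\ell}$-free. Then $ex(n-a,P_{2\ell})=[n-a,2\ell,2\ell]$, whose extremal graphs are unions of cliques $K_{2\ell-1}$, and one computes $\binom a2+a(n-a)+[n-a,2\ell,2\ell]=e(G_2(n,k_1,k_2,2\ell))+\binom{\ell}{2}$ whenever $(2\ell-1)\mid(n-a)$; already for $\ell=2$, $k_1=2$, $k_2=1$, $a=3$ this gives $4n-9$ against $e(G_2)=4n-10$. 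The same overshoot (by $\binom{\ell-s}{2}>0$) occurs for every $a$ with $s-\ell\leqslant a\leqslant s-2$, where $s=\ell k_1+k_2$. Closing it requires showing that a $P_{2\ell}$-free graph $H$ for which $K_a\vee H$ is $F$-free cannot be close to $dK_{2\ell-1}\cup K_r$ --- exactly the ``rigidity statement'' you flag as the technical heart but do not prove. The paper sidesteps this entirely by keeping all $k_1$ paths together in the reduction: for $k_1\geqslant2$ and large $n$ the unique $k_1P_{2\ell}$-extremal graph is $K_{\ell k_1-1}\vee\overline K_{n-\ell k_1+1}$ (Lemma \ref{ourlem}), with no clique-union competitor, so the constants match on the nose; for $k_1=1$ the clique-union competitor genuinely wins and produces $G_1$, which is why the two cases must be treated separately. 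As written, your argument is an outline whose hardest steps are deferred rather than carried out, so it does not yet constitute a proof.
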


We may point out that when $k_1=1$ and $r=2$ or $r=3$, $G_2(n, k_1, k_2, 6)$ is also an extremal graph of $F(k_1, k_2; 6)$. This fact was ignored in Theorem \ref{zhang2022}. Our results are given in the next three theorems, which determine the Turán numbers and the extremal graphs for $F(1, k; \ell)$ (see Theorem \ref{mainthm1}), $F(k_1, k_2; 2\ell)$ (see Theorem \ref{mainthm2}) and $F(2, k; 5)$ (see Theorem \ref{mainthm3}), respectively. The results of Theorem \ref{mainthm1} and Theorem \ref{mainthm2} imply Conjecture \ref{conj2}.

\begin{theorem}\label{mainthm1}
 Suppose $n =k+d(\ell-1)+r\geqslant (\ell^2-\ell+1)k+(\ell^2+3\ell-2)/2$,
 where $\ell\geqslant 4, 0\leqslant
 r< \ell-1$. Then
 \begin{equation*}
   ex(n, F(1, k; \ell))=e(G_1(n, k, \ell)).
 \end{equation*}
Moreover,
\begin{equation*}
\mathbb{EX} (n, F(1, k; \ell))=
\begin{cases}
  \left\{G_1(n, k, \ell), G_2(n, 1, k, \ell)\right\}, & \mbox{if $\ell$ is even, and $r=\frac{\ell}{2}$ or $r=\frac{\ell-2}{2}$,} \\
  \left\{G_1(n, k, \ell)\right\}, & \mbox{otherwise}.
\end{cases}
\end{equation*}
\end{theorem}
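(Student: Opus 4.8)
The plan is to prove the result in two parts: first the lower bound, then the matching upper bound with a characterization of the extremal graphs. For the lower bound I would show that $G_1(n,k,\ell)=K_k\vee(dK_{\ell-1}\cup K_r)$ contains no copy of $F(1,k;\ell)=P_\ell\cup kS_{\ell-1}$. The key observation is that any copy of $F(1,k;\ell)$ uses $k$ vertex-disjoint stars $S_{\ell-1}$, each having a center of degree $\ell-1\geq 3$. In $G_1$, deleting the $k$ vertices of the clique $K_k$ leaves $dK_{\ell-1}\cup K_r$, which has no vertex of degree exceeding $\ell-2$ (since each component has at most $\ell-1$ vertices); hence each of the $k$ star-centers must lie in the $K_k$ part. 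But then the $k$ centers exhaust $V(K_k)$, leaving $P_\ell$ to be embedded entirely in $dK_{\ell-1}\cup K_r$, which is impossible by Theorem \ref{Erdos}/\ref{faudree} as each component has fewer than $\ell$ vertices and the path's vertices would have to all lie in a single component. (One must check the stars and the path can indeed be chosen disjoint; with the $K_k$ centers fixed, the leaves of the stars must come from distinct components of $dK_{\ell-1}\cup K_r$ large enough to also leave no room for $P_\ell$ — a short counting argument using $n\geq(\ell^2-\ell+1)k+\cdots$.) When $\ell$ is even and $r\in\{\ell/2,(\ell-2)/2\}$, equation \eqref{11} and \eqref{2} give $e(G_1)=e(G_2)$, and the same style of argument (every $S_{\ell-1}$-center and, a fortiori, both endpoints of every long path in $F$ must lie in the $K_{\ell+k-1}$ — wait, in $G_2(n,1,k,\ell)=K_{\ell/2+k-1}\vee\overline{K}_{\cdots}$; $F(1,k;\ell)$ has independence covering number issues) shows $G_2(n,1,k,\ell)$ is also $F$-free; I would verify $F(1,k;\ell)\not\subseteq G_2$ by noting $F$ has a vertex cover of size $\ell/2+k-1$ is false, so instead: $G_2$ is $F$-free because removing the $\ell k_1+k_2-1=\ell/2+k-1$ dominating vertices leaves an independent set, and $F(1,k;\ell)$ with $k_1=1$ needs more than $\ell/2+k-1$ vertices in any vertex cover — this is exactly the $2P_5$-style computation and matches Theorem \ref{faudree}'s even-$\ell$, $r\in\{\ell/2,(\ell-2)/2\}$ case applied to the $P_\ell$ component.

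For the upper bound, let $G$ be an $F(1,k;\ell)$-free graph on $n$ vertices; I would argue by induction on $k$. The base case $k=0$ reduces to $ex(n,P_\ell)$, handled by Theorem \ref{faudree}. For the inductive step, I would first dispose of the case where $G$ has small maximum degree: if $\Delta(G)\le\ell-2$, then $G$ has no $S_{\ell-1}$ at all, so $e(G)\le(\ell-2)n/2<e(G_1(n,k,\ell))$ for $n$ in our range (using \eqref{11} and $k\ge1$). Otherwise pick a vertex $v$ with $d_G(v)\ge\ell-1$; it is the center of an $S_{\ell-1}$. If $G-N[v']$ for a well-chosen small set still contains $F(1,k-1;\ell)$ after removing one star, we would like to delete a single star and apply induction — but we must control how many edges leave with that star. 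The cleaner route: show that if $e(G)\ge e(G_1(n,k,\ell))$ then $G$ contains a vertex $v$ of degree $\ge n-O(k\ell)$ (a high-degree vertex), delete it, and induct to get $F(1,k-1;\ell)$ in $G-v$ avoiding $N_G(v)$-heavy structure, then re-attach $v$ as a new star center using its large neighborhood disjoint from the already-embedded forest. The edge count $e(G)-d_G(v)\ge e(G_1(n,k,\ell))-(n-1)$ must be compared with $e(G_1(n-1,k-1,\ell))$; by \eqref{11} this difference is exactly engineered so that the induction closes, modulo the contribution of the clique structure $dK_{\ell-1}\cup K_r$.

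The main obstacle will be the inductive step's structural part: extracting a single vertex whose removal both decreases the edge count by at most the right amount and leaves a graph still forced to contain $F(1,k-1;\ell)$ with enough "room" (vertices outside a bounded set) to extend back to $F(1,k;\ell)$. This requires a stability-type argument showing that near-extremal $F$-free graphs must look like $K_t\vee(\text{sparse graph})$ — concretely, that there are at least $k$ vertices of very high degree, or else the edge bound $e(G_1)$ forces a dense small core. I would handle this by a discharging/counting argument: bound the number of vertices of degree $\ge\ell-1$ (call this set $D$); if $|D|$ is large the stars are easy to find greedily together with the path (using $n$ large), contradicting $F$-freeness; if $|D|$ is small, then all but $|D|$ vertices have degree $\le\ell-2$, so $e(G)\le\binom{|D|}{2}+|D|(n-|D|)+(\ell-2)(n-|D|)/2$, and one checks this is $<e(G_1(n,k,\ell))$ unless $|D|\ge k$ and the low-degree part is nearly $(\ell-2)$-regular on a clique-union — which pins down $G$ to $G_1$ or (in the even case) $G_2$, giving the uniqueness statement. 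The constant $(\ell^2-\ell+1)k+(\ell^2+3\ell-2)/2$ in the hypothesis is precisely what makes all these inequalities strict, and I expect verifying those threshold inequalities to be the bulk of the routine (but unavoidable) computation.
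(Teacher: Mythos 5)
Your overall architecture (lower bound from the two constructions, upper bound by induction on $k$, base case from Theorem \ref{faudree}) matches the paper, and your lower-bound discussion, though garbled in places (a star center need not lie in $K_k$; what is true and suffices is that every connected subgraph on $\ell$ vertices, hence every $S_{\ell-1}$ and the $P_\ell$, must meet the join-clique, and for $G_2(n,1,k,\ell)$ the path must meet $K_{k+\ell/2-1}$ in at least $\ell/2$ vertices because the complement of that clique is independent), can be repaired. The genuine gap is in the inductive step of the upper bound. Your plan is to find a single vertex $v$ of very high degree, delete it, and induct; but you give no argument that such a vertex exists, and your fallback dichotomy does not close: if every vertex has degree at most $\ell k+\ell-2$, the graph can still have about $(\ell k+\ell-2)n/2$ edges, which for $\ell\geqslant 4$ exceeds $e(G_1(n,k,\ell))\approx(k+\ell/2-1)n$, so neither horn of your ``small maximum degree versus high-degree vertex'' dichotomy applies, and the stability/discharging argument you defer to is exactly the missing content. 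The paper produces the high-degree vertices differently: it first shows $e(G)>ex(n,kS_{\ell-1})$ (via Theorem \ref{li2022} and Lemma \ref{sl}), so $G$ contains $k$ disjoint stars; it removes one whole star, applies the induction hypothesis to $G-S_{\ell-1}$ (which is $F(1,k-1;\ell)$-free), and concludes that the removed star is incident to at least $\ell(\ell k+\ell-1)$ edges, hence contains a vertex of degree at least $\ell k+\ell-1$. Doing this for each star yields a set $U$ of $k$ high-degree vertices; one then shows $G[\overline{U}]$ must be $P_\ell$-free (else the path together with $k$ stars centered in $U$ and leaves in $N(U)\cap\overline{U}$ gives $F(1,k;\ell)$), whence $e(G)\leqslant\binom{k}{2}+k(n-k)+ex(n-k,P_\ell)=e(G_1(n,k,\ell))$ by Theorem \ref{faudree}. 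This is the step your proposal would have to supply.

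A second, smaller gap concerns the characterization of the extremal graphs. Equality forces $G=K_k\vee G[\overline{U}]$ with $G[\overline{U}]$ a $P_\ell$-extremal graph, and by Theorem \ref{faudree}, in the even case with $r\in\{\ell/2,(\ell-2)/2\}$ there is a whole family of candidates $\left((d-s-1)K_{\ell-1}\right)\cup\left(K_{(\ell-2)/2}\vee\overline{K}_{\ell/2+s(\ell-1)+r}\right)$ for $s=0,1,\dots,d-1$. Your sketch does not address why only $s=d-1$ (which yields $G_2(n,1,k,\ell)$) survives; the paper needs a separate lemma (Lemma \ref{d-1}) showing that for $s\leqslant d-2$ the corresponding join with $K_k$ already contains $F(1,k;\ell)$. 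Without this, your claimed description of $\mathbb{EX}(n,F(1,k;\ell))$ is not established.
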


\begin{theorem}\label{mainthm2}
 Suppose $n \geqslant (2{\ell}^2+3\ell-4)k_1+(4{\ell}^2-2\ell+1)k_2+3$,
 where $k_1\geqslant 2, \ell \geqslant 2$. Then
 \begin{equation*}
   ex(n, F(k_1, k_2; 2\ell))= e (G_2(n, k_1, k_2, 2\ell)).
 \end{equation*}
Moreover,
\begin{equation*}
\mbox{\rm{EX}} (n, F(k_1, k_2; 2\ell))=G_2(n, k_1, k_2, 2\ell).
\end{equation*}
\end{theorem}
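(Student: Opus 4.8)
The plan is to establish the two matching inequalities. For the lower bound I would verify that $G_2(n,k_1,k_2,2\ell)=K_{\ell k_1+k_2-1}\vee\overline K_{n-\ell k_1-k_2+1}$ contains no copy of $F(k_1,k_2;2\ell)$, which, with \eqref{2}, gives $ex(n,F(k_1,k_2;2\ell))\ge e(G_2(n,k_1,k_2,2\ell))$. Write $t=\ell k_1+k_2-1$. In $K_t\vee\overline K_{n-t}$ the part $\overline K_{n-t}$ is independent, so on any embedded $P_{2\ell}$ no two consecutive vertices can lie in $\overline K_{n-t}$, which forces at least $\ell$ of its $2\ell$ vertices into $K_t$; and any embedded $S_{2\ell-1}$ must place its centre into $K_t$. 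As the components of $F(k_1,k_2;2\ell)$ are pairwise vertex-disjoint, a copy of $F(k_1,k_2;2\ell)$ in $K_t\vee\overline K_{n-t}$ would need at least $\ell k_1+k_2=t+1$ vertices of $K_t$, which is impossible.

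For the upper bound together with the uniqueness of the extremal graph I would induct on $k_2$ (with $k_1\ge 2$ fixed), the base case $k_2=0$ being Lemma \ref{ourlem}; so assume $k_2\ge 1$. Let $G$ be an $n$-vertex $F(k_1,k_2;2\ell)$-free graph with $e(G)\ge e(G_2(n,k_1,k_2,2\ell))$; I want to force $G=G_2(n,k_1,k_2,2\ell)$. If the matching number satisfies $\nu(G)\le t$, then by the Erd\H{o}s--Gallai theorem on the maximum number of edges of a graph with matching number at most $t$, $e(G)\le\max\bigl\{\binom{2t+1}{2},\,\binom t2+t(n-t)\bigr\}$; since $n$ is large the second term is the larger, and by \eqref{2} it equals $tn-\binom{t+1}{2}=e(G_2(n,k_1,k_2,2\ell))$. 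Thus $e(G)\le e(G_2(n,k_1,k_2,2\ell))$, with equality only for the (for $n$ in this range, unique) extremal graph $K_t\vee\overline K_{n-t}=G_2(n,k_1,k_2,2\ell)$. This settles the case $\nu(G)\le t$.

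It remains to rule out the case $\nu(G)\ge t+1$. By \eqref{2} and the induction hypothesis (or Lemma \ref{ourlem} when $k_2=1$), $e(G)\ge e(G_2(n,k_1,k_2,2\ell))$ exceeds $ex(n,F(k_1,k_2-1;2\ell))=e(G_2(n,k_1,k_2-1,2\ell))$ by $n-t>0$, so $G$ contains a copy $H_0$ of $F(k_1,k_2-1;2\ell)$; set $R=V(H_0)$, a vertex set of bounded size $|R|=2\ell(k_1+k_2-1)$. Since $G$ is $F(k_1,k_2;2\ell)$-free, $G-R$ contains no $S_{2\ell-1}$, so $\Delta(G-R)\le 2\ell-2$ and $e(G-R)\le(\ell-1)(n-|R|)$; hence \[ e(R,V\setminus R)=e(G)-e(G[R])-e(G-R)\ \ge\ e(G_2(n,k_1,k_2,2\ell))-\binom{|R|}{2}-(\ell-1)(n-|R|)=\bigl(\ell(k_1-1)+k_2\bigr)n-O(1), \] which is linear in $n$; as $|R|$ is bounded, at least $\ell(k_1-1)+k_2\ (\ge\ell+1)$ vertices of $R$ send an unbounded number of edges into the $S_{2\ell-1}$-free set $V\setminus R$. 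The hard part will be to show that this cannot happen when $G$ is $F(k_1,k_2;2\ell)$-free: because $V\setminus R$ induces no $S_{2\ell-1}$ while several vertices of $R$ dominate a linear-sized chunk of $V\setminus R$, I would perform local surgery on $H_0$---rerouting one of its copies of $P_{2\ell}$ through a fresh vertex of $V\setminus R$, or pushing the leaves of one of its stars out into $V\setminus R$---so as to free enough vertices of $R$, or expose a vertex of large degree, to host one further $S_{2\ell-1}$, thereby embedding $F(k_1,k_2;2\ell)$ in $G$, a contradiction. Making this exchange argument succeed for a suitably chosen $H_0$, and checking that every counting inequality closes under the hypothesis $n\ge(2\ell^2+3\ell-4)k_1+(4\ell^2-2\ell+1)k_2+3$, is the technical core of the proof. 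Combining the two cases gives $G=G_2(n,k_1,k_2,2\ell)$, and hence also $\mbox{\rm{EX}}(n,F(k_1,k_2;2\ell))=G_2(n,k_1,k_2,2\ell)$.
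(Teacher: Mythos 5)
Your lower bound and your base case coincide with the paper's, but the heart of your upper-bound argument is not carried out, and this is a genuine gap. You split on the matching number $\nu(G)$: the case $\nu(G)\leqslant t=\ell k_1+k_2-1$ is fine (modulo citing the characterization of equality in the Erd\H{o}s--Gallai matching theorem for large $n$), but in the complementary case you only describe an intention --- ``perform local surgery on $H_0$'', ``rerouting one of its copies of $P_{2\ell}$'', ``making this exchange argument succeed \dots is the technical core of the proof'' --- without executing it. Moreover, your own count shows why this step is not routine: from $e(R,V\setminus R)\geqslant(\ell(k_1-1)+k_2)n-O(1)$ you obtain only about $\ell(k_1-1)+k_2$ vertices of $R$ with linearly many neighbours in the $S_{2\ell-1}$-free set $V\setminus R$, whereas a direct re-embedding of $F(k_1,k_2;2\ell)$ in the style of $G_2$ needs $\ell k_1+k_2$ such hubs; you are short by $\ell$, so the paths of $H_0$ must be reused in a controlled way, and nothing in your sketch does this. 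Note also that the case hypothesis $\nu(G)\geqslant t+1$ is never actually used in your second case, so the dichotomy buys you nothing there.

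For contrast, the paper avoids both the matching dichotomy and any surgery. It first checks $e(G)\geqslant e(G_2(n,k_1,k_2,2\ell))>ex(n,k_2S_{2\ell-1})$, so $G$ contains $k_2$ disjoint copies of $S_{2\ell-1}$; then, deleting one such star and applying the induction hypothesis at $n-2\ell$, it shows each star meets a vertex of degree at least $2\ell(k_1+k_2)-1$, yielding a set $U$ of $k_2$ high-degree vertices. Everything then reduces to $ex(n-k_2,k_1P_{2\ell})$ on $\overline{U}$ via Lemma \ref{ourlem}: if $G[U]$ is a clique completely joined to $\overline{U}$ one gets $G[\overline{U}]=\mbox{\rm{EX}}(n-k_2,k_1P_{2\ell})$ and hence $G=G_2$, and otherwise $e(G[\overline{U}])>ex(n-k_2,k_1P_{2\ell})$ forces $k_1P_{2\ell}\subseteq G[\overline{U}]$, after which the degrees of the vertices of $U$ supply the $k_2$ stars and give a contradiction. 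If you want to salvage your outline, the cleanest fix is to replace your unexecuted exchange step with this reduction: extract high-degree star centres first, and let the known extremal result for $k_1P_{2\ell}$ do the structural work.
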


\begin{theorem}\label{mainthm3}
 Suppose $n \geqslant 21k+38$. Then
 \begin{equation*}
   ex(n, F(2, k; 5))= e(G_3(n, k)).
 \end{equation*}
Moreover,
\begin{equation*}
\mbox{\rm{EX}} (n, F(2, k; 5))=G_3(n, k).
\end{equation*}
\end{theorem}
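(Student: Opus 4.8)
\medskip
\noindent\textbf{Proof proposal.}

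The plan is to establish the two bounds $ex(n,F(2,k;5))\geq e(G_3(n,k))$ and $ex(n,F(2,k;5))\leq e(G_3(n,k))$, the second together with uniqueness of the extremal graph, the latter by induction on $k$. The base case $k=0$ is exactly Lemma \ref{2P5g}, since $K_3\vee(K_2\cup\overline{K}_{n-5})=G_3(n,0)$ and $3n-5=e(G_3(n,0))$. For the lower bound it suffices to check that $G_3(n,k)=K_{k+3}\vee(K_2\cup\overline{K}_{n-k-5})$ contains no $F(2,k;5)=2P_5\cup kS_4$; the value then follows from \eqref{3}. Write $A$ for the dominating set of size $k+3$ and $B=V(G_3(n,k))\setminus A$, so that $G_3(n,k)[B]$ has exactly one edge. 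A short counting argument shows that every copy of $P_5$ in $G_3(n,k)$ meets $A$ in at least two vertices (the vertices it places in $B$, together with the path edges among them, form disjoint subpaths of $G_3(n,k)[B]$, which altogether use at most one edge), and that a copy of $S_4$ with centre in $B$ uses at least three vertices of $A$ as leaves, and at least four unless its centre is a $K_2$-vertex of $B$ and one leaf is the other. Since at most one component of $F(2,k;5)$ can contain both $K_2$-vertices of $B$, if $j$ denotes the number of the $k$ stars whose centre lies in $A$ then any embedding would require at least $4+j+4(k-j)=4+4k-3j$ vertices of $A$ (and essentially the same count if one component exploits the edge of $G_3(n,k)[B]$), which exceeds $|A|=k+3$ because $j\leq k$; hence $F(2,k;5)\not\subseteq G_3(n,k)$.

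For the upper bound, let $G$ be $F(2,k;5)$-free on $n\geq 21k+38$ vertices with $e(G)\geq e(G_3(n,k))$; we must show $G=G_3(n,k)$. If $G$ is $S_4$-free, then $e(G)\leq\lfloor 3n/2\rfloor<e(G_3(n,k))=(k+3)n-(k^2+7k+10)/2$ by Lemma \ref{sl}, a contradiction; so $G$ contains an $S_4$. Fix a threshold $T$ of order $k$ (e.g.\ $T=5k+9$) and set $D=\{v\in V(G):d_G(v)\geq T\}$. The argument then splits according to $|D|$.

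If $|D|\geq k$, choose $U\subseteq D$ with $|U|=k$. Then $G-U$ is $2P_5$-free: a copy of $2P_5$ in $G-U$ could be completed to a copy of $F(2,k;5)$ in $G$ by hanging from each vertex of $U$ an $S_4$ whose four leaves are picked one at a time from its $\geq T$ neighbours, avoiding the at most $5k+5$ vertices already used. Hence $e(G-U)\leq ex(n-k,2P_5)=3(n-k)-5$ by Lemma \ref{2P5g} (valid since $n-k\geq 38$), and a direct computation gives
\[ e(G)\leq \binom{k}{2}+k(n-k)+\bigl(3(n-k)-5\bigr)=e(G_3(n,k)). \]
Equality forces $G[U]=K_k$, $U$ complete to $V(G)\setminus U$, and $G-U\in\mathbb{EX}(n-k,2P_5)=\{G_3(n-k,0)\}$, whence $G=K_k\vee G_3(n-k,0)=G_3(n,k)$. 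If $1\leq|D|=d\leq k-1$, the same device shows that $G-D$ is $F(2,k-d;5)$-free (a copy there would be completed to $F(2,k;5)$ in $G$ using the $d$ vertices of $D$ as centres of $d$ further stars), so by the induction hypothesis $e(G-D)\leq ex(n-d,F(2,k-d;5))=e(G_3(n-d,k-d))$; combined with $e(G[D])\leq\binom{d}{2}$, $e_G(D,V(G)\setminus D)\leq d(n-d)$ and the identity $\binom{d}{2}+d(n-d)+e(G_3(n-d,k-d))=e(G_3(n,k))$ (a routine check), this again yields $e(G)\leq e(G_3(n,k))$ with equality only when $G=K_d\vee G_3(n-d,k-d)=G_3(n,k)$.

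This leaves the case $D=\emptyset$, i.e.\ $\Delta(G)\leq T-1=O(k)$, which I expect to be the main obstacle, since the blunt estimate $e(G)\leq\tfrac12(T-1)n$ is far too weak to contradict $e(G)\geq e(G_3(n,k))$ and one must instead exploit $F$-freeness directly. Here $G$ contains $kS_4$ (because $e(G)>ex(n,kS_4)$, using Theorem \ref{li2022}, the case $k=1$ being immediate) and $2P_5$ (because $e(G)>ex(n,2P_5)$), and the aim is to realise these disjointly: take a maximum packing of vertex-disjoint copies of $S_4$, discard all but $k$ of them chosen from the sparsest part of $G$, and check that what remains is still denser than $ex(n-5k,2P_5)$, hence contains a copy of $2P_5$ and so produces $F(2,k;5)$ in $G$. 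Carrying this out inside the hypothesis $n\geq 21k+38$ requires careful control of the $O(k^2)$ error terms, as the naive implementation loses a constant factor; I would expect this bookkeeping — perhaps combined with an auxiliary induction on $n$ forcing $\delta(G)\geq k+3$, and again with the extremal structure of $2P_5$-free graphs from Lemma \ref{2P5g} and a quantitative strengthening of the greedy embeddings above — to be the technical heart of the proof.
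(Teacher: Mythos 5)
Your lower bound and your treatment of the case $|D|\geqslant k$ are sound and essentially coincide with the paper's main argument (delete a set $U$ of $k$ vertices of degree at least $5k+9$, observe that $G-U$ is $2P_5$-free by a greedy completion, bound $e(G)\leqslant\binom{k}{2}+k(n-k)+ex(n-k,2P_5)=e(G_3(n,k))$, and read off the equality case from Lemma \ref{2P5g}). But the case $D=\emptyset$, which you explicitly leave open and call ``the technical heart of the proof,'' is a genuine gap: with only $\Delta(G)\leqslant 5k+8$ one gets $e(G)\leqslant(5k+8)n/2$, which does not contradict $e(G)\geqslant(k+3)n-O(k^2)$, and your sketched packing argument is not carried out. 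As it stands the proof is incomplete.

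The missing idea is that the induction hypothesis already forces $|D|\geqslant k$, so your problematic cases never occur. Since $e(G)\geqslant e(G_3(n,k))>ex(n,kS_4)$ (by Theorem \ref{li2022} and Lemma \ref{sl}), $G$ contains $k$ vertex-disjoint copies of $S_4$. Fix any one of them; since $G$ is $F(2,k;5)$-free, $G-S_4$ is $F(2,k-1;5)$-free, so by induction $e(G-S_4)\leqslant e(G_3(n-5,k-1))$. Hence the five vertices of that copy are together incident to at least
\begin{equation*}
e(G_3(n,k))-e(G_3(n-5,k-1))=n+4k+7\geqslant 5(5k+9)
\end{equation*}
edges (using $n\geqslant 21k+38$), so one of them has degree at least $5k+9$. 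Applying this to each of the $k$ disjoint copies produces the required set $U$ of $k$ vertices of degree at least $5k+9$, after which your $|D|\geqslant k$ argument finishes the proof. Note that this is also where the threshold $n\geqslant 21k+38$ is actually used; without routing the degree bound through the induction hypothesis in this way, that hypothesis has no purchase on the bounded-degree case. (Your intermediate case $1\leqslant|D|\leqslant k-1$ is correct but becomes vacuous once this observation is in place.)
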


\section{Proofs of the main results}\label{sec3}

\subsection{The Turán number and the extremal graphs for $F(1, k; \ell)$}

Write $n=k+d(\ell-1)+r$, where $0\leqslant r< \ell-1$, and $$H=K_k\vee \left(\left(\left(d-s-1\right)K_{\ell-1}\right)\cup \left(K_{\frac{\ell-2}{2}}\vee \overline{K}_{\frac{\ell}{2}+s\left(\ell-1\right)+r}\right)\right),$$
where $\ell$ is an even integer.
Recall that $F(1, k; \ell)=P_{\ell}\cup kS_{\ell-1}$.
We first present the following lemma which help us to determine the extremal graphs for $F(1, k; \ell)$.

\begin{lemma}\label{d-1}
If $n\geqslant \ell k+\ell$ and $s\in\{0, 1, \cdots, d-2\}$, then $H$ contains a copy of $F(1, k; \ell)$.
\end{lemma}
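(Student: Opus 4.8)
The plan is to explicitly exhibit a copy of $F(1,k;\ell)=P_\ell\cup kS_{\ell-1}$ inside $H$ by distributing its components among the three "types" of pieces that make up $H$: the clique $K_k$ forming the join, the $(d-s-1)$ copies of $K_{\ell-1}$, and the book-like piece $K_{(\ell-2)/2}\vee\overline K_{(\ell)/2+s(\ell-1)+r}$. First I would record the key structural fact that every vertex of the $K_k$ in the join is adjacent to all of the remaining $n-k$ vertices, so each such vertex can serve as the center of a star $S_{\ell-1}$ provided there are at least $\ell-1$ still-unused vertices available outside $K_k$; this is where the hypothesis $n\geqslant \ell k+\ell$ enters, since after carving out the path $P_\ell$ and up to $k$ star-centers we need roughly $k(\ell-1)$ leaves left over.

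The construction I have in mind is as follows. Use the $K_k$ to supply the $k$ centers of the $k$ stars $S_{\ell-1}$. For the leaves of these stars and for the path $P_\ell$, work inside the part $D:=\bigl((d-s-1)K_{\ell-1}\bigr)\cup\bigl(K_{(\ell-2)/2}\vee\overline K_{(\ell)/2+s(\ell-1)+r}\bigr)$, which has $n-k$ vertices. Since $s\leqslant d-2$, the term $d-s-1\geqslant 1$, so $D$ contains at least one full $K_{\ell-1}$; a path $P_\ell$ has $\ell$ vertices and hence does not fit inside a single $K_{\ell-1}$, but it does fit inside the book $K_{(\ell-2)/2}\vee\overline K_{(\ell)/2+s(\ell-1)+r}$, which contains a Hamilton path on any $\ell$ of its vertices as long as the number of "spine" vertices $(\ell-2)/2$ is at least the number of independent vertices used minus one — exactly the condition that makes $K_{(\ell-2)/2}\vee\overline K_{\ell/2}$ contain $P_\ell$, which it does. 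So place $P_\ell$ greedily inside the book part. After removing these $\ell$ vertices, $D$ still contains the $d-s-1$ cliques $K_{\ell-1}$ plus whatever remains of the book; I then need to find $k$ disjoint sets of $\ell-1$ vertices among these to serve as the star leaves, and since the $k$ centers sit in the join they are adjacent to all of them regardless of which part the leaves come from. A leaf count shows this needs $|V(D)|-\ell\geqslant k(\ell-1)$, i.e. $n-k-\ell\geqslant k(\ell-1)$, which rearranges to $n\geqslant \ell k+\ell$, precisely the hypothesis.

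The main obstacle — and the step requiring care rather than a one-line count — is the \emph{compatibility} of these choices: the path $P_\ell$ and the $k(\ell-1)$ star-leaves must be pairwise disjoint and must all lie outside the $K_k$, and one must check that the book part really does accommodate a $P_\ell$ (equivalently, that $(\ell-2)/2\geqslant 1$, i.e. $\ell\geqslant 4$, which is implicit in the context where $F(1,k;\ell)$ with even $\ell$ is considered) and that, after deleting the $P_\ell$, enough vertices remain in the cliques-and-book structure to be partitioned into $k$ blocks of size $\ell-1$. I would handle this by first using the $d-s-1\geqslant 1$ available $K_{\ell-1}$'s and the residual independent vertices of the book as a pool, noting that the exact internal structure of these leftover vertices is irrelevant since every star center is universal; the only thing to verify is the cardinality inequality $n-k-\ell\geqslant k(\ell-1)$, equivalently $n\geqslant \ell k+\ell$. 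Thus the lemma follows once the placement of the single $P_\ell$ in the book is justified and the leaf-count is checked.
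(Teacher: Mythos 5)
There is a genuine gap, and it sits at the exact point your construction leans on hardest: the claim that the book part $K_{(\ell-2)/2}\vee\overline{K}_{\ell/2+s(\ell-1)+r}$ contains a copy of $P_{\ell}$. It does not. In $K_{a}\vee\overline{K}_{b}$ any path meets the independent part in a set of pairwise non-adjacent vertices, so consecutive occurrences of such vertices must be separated by spine vertices; with $a=(\ell-2)/2$ spine vertices a path can use at most $a+1=\ell/2$ independent vertices, hence at most $(\ell-2)/2+\ell/2=\ell-1$ vertices in total. This is precisely why $K_{(\ell-2)/2}\vee\overline{K}_{m}$ appears in $\mathbb{EX}(n,P_{\ell})$ in Theorem \ref{faudree}: it is $P_{\ell}$-free. (Your parenthetical ``$K_{(\ell-2)/2}\vee\overline{K}_{\ell/2}$ contains $P_{\ell}$'' fails already on cardinality grounds, since that graph has only $\ell-1$ vertices.) A second warning sign is that your argument never genuinely uses the hypothesis $s\leqslant d-2$: the spare $K_{\ell-1}$'s enter only as part of the leaf pool. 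If the argument were sound it would apply verbatim to $s=d-1$, i.e.\ it would show $F(1,k;\ell)\subseteq G_2(n,1,k,\ell)$, contradicting Theorem \ref{mainthm1}, which asserts that $G_2(n,1,k,\ell)$ is an extremal ($F(1,k;\ell)$-free) graph.

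The repair is the paper's construction, which distributes the components differently. Because $s\leqslant d-2$ there is at least one full component $K_{\ell-1}$ outside the book; take one vertex $u_1$ of the join clique $K_k$ together with the $\ell-1$ vertices of that $K_{\ell-1}$ to form the path $P_{\ell}$ (here the join edges supply the missing $\ell$-th vertex). One star $S_{\ell-1}$ is then placed entirely inside the book, centered at a spine vertex $w_1\in V(K_{(\ell-2)/2})$, which is adjacent to everything else in the book. The remaining $k-1$ stars are centered at $u_2,\dots,u_k$, with leaves drawn from the as-yet-unused vertices of the other $K_{\ell-1}$'s and of the book; the count $n\geqslant \ell k+\ell$ guarantees enough leaves. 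So the essential idea you are missing is that the path must consume one join vertex and one whole $K_{\ell-1}$, which forces one of the $k$ stars to be found inside the book rather than centered in $K_k$.
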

\begin{proof}
If $s\in\{0, 1, \cdots, d-2\}$, we have $d-s-1\geqslant1$. In $H$, let
$$V(K_k)=\{u_1, u_2, \cdots, u_k\},
$$ $$V\left((d-s-1)K_{\ell-1}\right)=\{v_1, v_2, \cdots, v_{\ell-1}\}\cup V_1,$$
$$V\left(K_{\frac{\ell-2}{2}}\vee \overline{K}_{\frac{\ell}{2}+s\left(\ell-1\right)+r}\right)=\{w_1, w_2, \cdots, w_{\ell}\}\cup V_2,$$
where $v_1, v_2, \cdots, v_{\ell-1}$ are the vertices of an induced subgraph $K_{\ell-1}$ of $(d-s-1)K_{\ell-1}$, and $w_1\in V\left(K_{\frac{\ell-2}{2}}\right)$.
We may check that $H[\{u_1, v_1, v_2, \cdots, v_{\ell-1}\}]$ is a path on $\ell$ vertices, and $H[\{w_1, w_2, \cdots, w_{\ell}\}]$ is a star on $\ell$ vertices with center vertex $w_1$. We may find another $(k-1)$ copies of $S_{\ell-1}$ with $(k-1)$ center vertices in $\{u_2, u_3, \cdots, u_k\}$ and $\left(k-1\right)\left(\ell-1\right)$ leaves vertices in $V_1\cup V_2$. Hence, we have $F(1, k; \ell)\subseteq H$.
\end{proof}

\begin{proof}[Proof of Theorem \ref{mainthm1}]

We suppose $n\geqslant (\ell^2-\ell+1)k+(\ell^2+3\ell-2)/2$ in this subsection. Recall that $$G_1(n, k, \ell)=K_{k}\vee \left(dK_{\ell-1}\cup K_r\right)$$
and
$$G_2(n, 1, k, \ell)=K_{k+\frac{\ell}{2}-1}\vee \overline{K}_{n-k-\frac{\ell}{2}+1}.$$

First we prove that both $G_1(n, k, \ell)$ and $G_2(n, 1, k, \ell)$ are $F(1, k; \ell)$-free. If $G_1(n, k, \ell)$ contains a copy of $F(1, k; \ell)$, then each $S_{\ell-1}$ contains at least one vertex of $K_k$, and the $P_{\ell}$ contains at least one vertex of $K_k$, which is a contradiction.  If $\ell$ is even and $G_2(n, 1, k, \ell)$ contains a copy of $F(1, k; \ell)$, then each $S_{\ell-1}$ contains at least one vertex of $K_{k+\frac{\ell}{2}-1}$, and the $P_{\ell}$ contains at least $\ell/2$ vertices of $K_{k+\frac{\ell}{2}-1}$, which is a contradiction. Hence, $G_2(n, 1, k, \ell)$ is $F(1, k; \ell)$-free. Furthermore, by (\ref{11}) and (\ref{2}), we have
\begin{equation*}
  e(G_1(n, k, \ell))-e(G_2(n, 1, k, \ell))=\frac{(\ell-2r)(\ell-2r-2)}{8}\geqslant0,
\end{equation*}
with equality if and only if $r=\ell/2$ or $r=(\ell-2)/2$. Thus we have
\begin{equation}\label{eq1>=}
  ex(n, F(1, k; \ell))\geqslant \mbox{\rm{max}}\big\{e(G_1(n, k, \ell)), e(G_2(n, 1, k, \ell))\big\}=e (G_1(n, k, \ell)).
\end{equation}

Now we will show the inequality
\begin{equation}\label{eq1<=}
  ex(n, F(1, k; \ell))\leqslant e(G_1(n, k, \ell))
\end{equation}
by induction on $k$. For $k=0$, $n=d(\ell-1)+r$, we have $G_1(n, 0, \ell)=dK_{\ell-1}\cup K_r$ and $F(1, 0; \ell)=P_{\ell}$. Then by Theorem \ref{faudree}, $ex(n, F(1, 0; \ell))=[n, \ell, \ell]=e(G_1(n, 0, \ell))$ holds. Suppose that $k\geqslant1$ and (\ref{eq1<=}) holds for all $k'<k$.
Let $G$ be an $n$-vertex $F(1, k; \ell)$-free graph with $e(G)=ex(n, F(1, k; \ell))$. By (\ref{eq1>=}) and (\ref{11}), we have
\begin{align*}
  e(G) & \geqslant  e(G_1(n, k, \ell)) \\
  & = \left(k+\frac{\ell}{2}-1\right)n-\frac{k^2+(\ell-1)(k+r)-r^2}{2} \\
  & > \left(k+\frac{\ell}{2}-2\right)n-\frac{(k-1)(k+\ell-2)}{2} \\
  & = (k-1)\left(n-\frac{k}{2}\right)+\frac{(\ell-2)(n-k+1)}{2} \\
  & \geqslant (k-1)\left(n-\frac{k}{2}\right)+\left\lfloor\frac{(\ell-2)(n-k+1)}{2}\right\rfloor \\
  & \geqslant ex(n,kS_{\ell-1}),
\end{align*}
which implies $G$ contains $k$ copies $S_{\ell-1}$ by Theorem \ref{li2022} and Lemma \ref{sl}. By induction hypothesis,
$$ex(n-\ell, F(1, k-1; \ell)) \leqslant e(G_1(n-\ell, k-1, \ell)).$$
Since $G$ is $F(1, k; \ell)$-free, $G-S_{\ell-1}$ is $F(1, k-1; \ell)$-free. Hence,
\begin{equation}\label{G-S}
  e (G-S_{\ell-1})\leqslant ex(n-\ell, F(1, k-1; \ell))\leqslant e (G_1(n-\ell, k-1, \ell)).
\end{equation}
Let $m_0$ be the number of edges incident with the vertices of $S_{\ell-1}$ in $G$, that is $m_0=e(G)-e(G-S_{\ell-1})$. Noting that $n\geqslant (\ell^2-\ell+1)k+(\ell^2+3\ell-2)/2$, by (\ref{eq1>=}) and (\ref{G-S}), we have
\begin{align*}
  m_0&=e(G)-e(G-S_{\ell-1}) \\
     &\geqslant e(G_1(n, k, \ell))-e(G_1(n-\ell, k-1, \ell)) \\
     &=n+(\ell-1)k+\frac{\ell^2-5\ell+2}{2} \\
     &\geqslant \ell(\ell k+\ell-1).
\end{align*}
That is, each copy of $S_{\ell-1}$ in $G$ contains a vertex with degree at least $\ell k+\ell-1$. Let $U\subseteq V(G)$ be a set of vertices with degree at least $\ell k+\ell-1$ and each vertex in $U$ belongs to distinct $S_{\ell-1}$. Then $\left|U\right|=k$. Let $\overline{U}=V(G)\backslash U$. Then $\left|\overline{U}\right|=n-k$.
Set $N\left(U\right)=\bigcup_{u\in U}N\left(u\right)$ and $W_0=N\left(U\right)\cap \overline{U}$. Then $\left|W_0\right|\geqslant (\ell-1)k+\ell$.
If $G[\overline{U}]$ contains a copy of $P_{\ell}$,  we set $W_1=W_0\backslash V(P_{\ell})$, then we have
$\left|W_1\right| \geqslant \left|W_0\right|-\ell \geqslant \left(\ell-1\right)k$.
For any $u\in U$, we have
\begin{equation*}
  d_{G[W_1]}\left(u\right) \geqslant (\ell k+\ell-1)-(k-1)-\ell=(\ell-1)k.
\end{equation*}
We may find $k$ copies of $S_{\ell-1}$ in $G-P_{\ell}$ with $k$ center vertices in $U$ and $(\ell-1)k$ leaves vertices in $W_1$. Hence $F(1, k; \ell) \subseteq G$, which is a contradiction.
Therefore, $G[\overline{U}]$ is $P_{\ell}$-free.
Recall that $\left|\overline{U}\right|=n-k=d(\ell-1)+r$. By Theorem \ref{faudree}, we have
\begin{equation}\label{<=}
  e\left(G[\overline{U}]\right)\leqslant ex(n-k, P_{\ell})=[n-k, \ell, \ell].
\end{equation}
Hence, by (\ref{<=}) and (\ref{11}), we have
\begin{align*}
  e(G) & =e(G[U])+e\left(U, \overline{U}\right)+e\left(G[\overline{U}]\right) \\
   & \leqslant \binom{k}{2}+k(n-k)+[n-k, \ell, \ell] \\
   & = \left(k+\frac{\ell}{2}-1\right)n-\frac{k^2+(\ell-1)(k+r)-r^2}{2} \\
   & = e(G_1(n, k, \ell)).
\end{align*}
Thus (\ref{eq1<=}) holds, and therefore, $e(G)=ex(n, F(1, k; \ell))=e(G_1(n, k, \ell))$ holds.

Now we determine the extremal graphs for $F(1, k; \ell)$. If $e(G)=e(G_1(n, k, \ell))$, then the equality case of (\ref{<=}) holds, and $G=K_k\vee G[\overline{U}]$. By Theorem \ref{faudree}, we consider the following two cases. (a) $G[\overline{U}]=dK_{\ell-1}\cup K_r$, where $0\leqslant r<\ell-1$. Then $$G=K_k\vee\left(dK_{\ell-1}\cup K_r\right)=G_1(n, k, \ell).$$
(b) $\ell$ is even, $r={\ell}/2$ or $r=(\ell-2)/2$, and $G[\overline{U}]=\left(\left(d-s-1\right)K_{\ell-1}\right)\cup \left(K_{\frac{\ell-2}{2}}\vee \overline{K}_{\frac{\ell}{2}+s\left(\ell-1\right)+r}\right)$,
where $s=0, 1, \cdots, d-1$. Noting that $G$ is $F(1, k; \ell)$-free, we have $s=d-1$ by Lemma \ref{d-1}, and then $G=K_{k+\frac{\ell}{2}-1}\vee \overline{K}_{n-k-\frac{\ell}{2}+1}=G_2(n, 1, k,\ell)$.

Hence, the extremal graph for $F(1, k; \ell)$ is $G_1(n, k, \ell)$, or $G_2(n, 1, k, \ell)$ if $\ell$ is even, $r=\ell/2$ or $r=(\ell-2)/2$.
The proof is completed.
\end{proof}

\subsection{The Turán number and the extremal graph for $F(k_1, k_2; 2\ell)$}

\begin{proof}[Proof of Theorem \ref{mainthm2}]

We suppose $n\geqslant (2{\ell}^2+3\ell-4)k_1+(4{\ell}^2-2\ell+1)k_2+3$ in this subsection. Recall that $$G_2(n, k_1, k_2, 2\ell)=K_{\ell k_1+k_2-1}\vee \overline{K}_{n-\ell k_1-k_2+1}$$
and
$$F(k_1, k_2; 2\ell)=k_1P_{2\ell}\cup k_2S_{2\ell-1}.$$

If $G_2(n, k_1, k_2, 2\ell)$ contains a copy of $F(k_1, k_2; 2\ell)$, then each $S_{2\ell-1}$ contains at least one vertex of $K_{\ell k_1+k_2-1}$ and each $P_{2\ell}$ contains at least $\ell$ vertices of $K_{\ell k_1+k_2-1}$. This is a contradiction. Hence $G_2(n, k_1, k_2, 2\ell)$ is $F(k_1, k_2; 2\ell)$-free and
\begin{equation}\label{eq2>=}
  ex(n, F(k_1, k_2; 2\ell))\geqslant e(G_2(n, k_1, k_2, 2\ell)).
\end{equation}

Now we prove Theorem \ref{mainthm2} by induction on $k_2$.  For $k_2=0$, $n\geqslant (2{\ell}^2+3\ell-4)k_1+3$, $G_2(n, k_1, 0, 2\ell)=K_{\ell k_1-1}\vee \overline{K}_{n-\ell k_1+1}$ and $F(k_1, 0; 2\ell)=k_1P_{2\ell}$ hold, and the results follow from Lemma \ref{ourlem}.
Suppose that $k_2\geqslant 1$ and Theorem \ref{mainthm2} holds for all $k_2'<k_2$. Suppose $G$ is an $F(k_1, k_2; 2\ell)$-free graph with $e (G)=ex(n, F(k_1, k_2; 2\ell))$. Hence, by (\ref{eq2>=}) and (\ref{2}), we have
  \begin{align*}
    e(G) & \geqslant e(G_2(n, k_1, k_2, 2\ell)) \\
     & =(\ell k_1+k_2-1)n-\frac{(\ell k_1+k_2)(\ell k_1+k_2-1)}{2} \\
     & >(\ell +k_2-2)n-\frac{(k_2-1)(k_2+2\ell-2)}{2}\\
     & = \frac{(k_2-1)(k_2-2)}{2}+(k_2+\ell-2)(n-k_2+1)\\
     & \geqslant ex(n,k_2S_{2\ell-1}),
  \end{align*}
which implies $G$ contains $k_2$ copies $S_{2\ell-1}$ by Theorem \ref{li2022} and Lemma \ref{sl}. By induction hypothesis,
$$ex(n-2\ell, F(k_1, k_2-1; 2\ell))=e(G_2(n-2\ell, k_1, k_2-1, 2\ell)).$$
Since $G$ is $F(k_1, k_2; 2\ell)$-free, $G-S_{2\ell-1}$ is $F(k_1, k_2-1; 2\ell)$-free. Hence,
\begin{equation}\label{eq2<=}
  e(G-S_{2\ell-1})\leqslant ex(n-2\ell, F(k_1, k_2-1; 2\ell))= e (G_2(n-2\ell, k_1, k_2-1, 2\ell)).
\end{equation}
Let $m_0$ be the number of edges incident with the vertices of $S_{2\ell-1}$ in $G$. Noting that $\ell\geqslant2$, $k_1\geqslant2$ and $n\geqslant (2{\ell}^2+3\ell-4)k_1+(4{\ell}^2-2\ell+1)k_2+3$, by (\ref{eq2>=}) and (\ref{eq2<=}), we have
\begin{align*}
    m_0 &= e(G)-e(G-S_{2\ell-1}) \\
   &\geqslant e(G_2(n, k_1, k_2, 2\ell))-e(G_2(n-2\ell, k_1, k_2-1, 2\ell))\\
   &= n+(2{\ell}^2-\ell)k_1+(2\ell-1)k_2-4\ell+1 \\
   &\geqslant 2\ell(2\ell(k_1+k_2)-1)+(2\ell-4)(k_1-1)\\
   &\geqslant 2\ell(2\ell(k_1+k_2)-1).
\end{align*}
  Then we may construct a vertex subset $U\subseteq V(G)$ of order $k_2$ whose each vertex has degree at least $2\ell(k_1+k_2)-1$. Write $\overline{U}=V(G)\backslash U$. Then $\left|\overline{U}\right|=n-k_2$. By (\ref{eq2>=}), we have
$$ e\left(G[\overline{U}]\right) =e(G)-e(G[U])-e\left(U, \overline{U}\right)
      \geqslant e(G_2(n, k_1, k_2, 2\ell))-e(G[U])-e\left(U, \overline{U}\right).$$
  We consider the following two cases.

\vspace{0.5cm}
  \textbf{Case 1.} $G[U]$ is a clique, and each vertex in $U$ is adjacent to each vertex in $\overline{U}$.

  In this case, $e(G[U])=k_2(k_2-1)/2$ and $e\left(U, \overline{U}\right)=k_2(n-k_2)$. Then by Lemma \ref{ourlem}, we have
  \begin{align*}
    e\left(G[\overline{U}]\right) & \geqslant e(G_2(n, k_1, k_2, 2\ell))-e(G[U])-e\left(U, \overline{U}\right) \\
     & = (\ell k_1-1)\left(n-\frac{\ell k_1}{2}-k_2\right)\\
     & = ex(n-k_2, k_1P_{2\ell}).
  \end{align*}
   If $e\left(G[\overline{U}]\right)>ex(n-k_2, k_1P_{2\ell})$, then we have $k_1P_{2\ell}\subseteq G[\overline{U}]$. Set $W=\overline{U}\backslash V(k_1P_{2\ell})$.
   Note that
   \begin{equation*}
     \left|W\right| =\left|\overline{U}\backslash V(k_1P_{2\ell}) \right| = n-k_2-2\ell k_1 \geqslant(2{\ell}^2+\ell-4)k_1+(4{\ell}^2-2\ell)k_2+3 >(2\ell-1)k_2,
   \end{equation*}
   and each vertex in $U$ is adjacent to each vertex in $W$. So there are $k_2$ copies of $S_{2\ell-1}$ in $G[V(G)\backslash V(k_1P_{2\ell})]$ with $k_2$ center vertices in $U$ and $(2\ell-1)k_2$ leaves vertices in $W$. Hence, we have $k_1P_{2\ell}\cup k_2S_{2\ell-1}\subseteq G$, which is a contradiction.

  Hence $e\left(G[\overline{U}]\right)=ex(n-k_2, k_1P_{2\ell})$ and $G[\overline{U}]$ does not contain $k_1$ copies of $P_{2\ell}$. By Lemma \ref{ourlem} again, $$G[\overline{U}]=\mbox{\rm{EX}}(n-k_2, k_1P_{2\ell})=K_{\ell k_1-1}\vee \overline{K}_{n-\ell k_1-k_2+1}.$$
  Hence, $$G=K_{k_2}\vee \left(K_{\ell k_1-1}\vee \overline{K}_{n-\ell k_1-k_2+1}\right)=G_2(n, k_1, k_2, 2\ell).$$

\vspace{0.5cm}
  \textbf{Case 2.} $G[U]$ is not a clique, or some vertex in $U$ is not adjacent to some vertex in $\overline{U}$.

  In this case, either $e(G[U])<k_2(k_2-1)/2$ or $e\left(U, \overline{U}\right)<k_2(n-k_2)$ holds. Then by Lemma \ref{ourlem}, we have
  \begin{align*}
   e\left(G[\overline{U}]\right) & \geqslant e(G_2(n, k_1, k_2, 2\ell))-e(G[U])-e\left(U, \overline{U}\right)\\
    & >(\ell k_1+k_2-1)n-\frac{(\ell k_1+k_2)(\ell k_1+k_2-1)}{2}-\frac{k_2(k_2-1)}{2}-k_2(n-k_2)\\
    & = (\ell k_1-1)\left(n-\frac{\ell k_1}{2}-k_2\right)\\
    & = ex(n-k_2, k_1P_{2\ell}),
  \end{align*}
  which implies $k_1P_{2\ell}\subseteq G[\overline{U}]$. Set $W=\overline{U}\backslash V(k_1P_{2\ell})$. For any vertex $u\in U$, $$d_{G[W]}(u)\geqslant (2\ell (k_1+k_2)-1)-(k_2-1)-2\ell k_1=(2\ell-1)k_2.$$
  Hence, we can find $k_2$ copies of $S_{2\ell-1}$ in $G[V(G)\backslash V(k_1P_{2\ell})]$ with $k_2$ center vertices in $U$ and $(2\ell-1)k_2$ leaves vertices in $W$. Hence, there is a copy of $k_1P_{2\ell}\cup k_2S_{2\ell-1}$ in $G$, which is a contradiction.
  The proof is completed.
  \end{proof}

\subsection{The Turán number and the extremal graph for $F(2, k; 5)$}

\begin{proof}[Proof of Theorem \ref{mainthm3}]
We suppose $n\geqslant 21k+38$ in this
subsection. Recall that $$G_3(n, k)=K_{k+3}\vee \left(K_2\cup \overline{K}_{n-k-5}\right)$$
and
$$F(2, k; 5)=2P_5\cup kS_4.$$

If $G_3(n, k)$ contains a copy of $F(2, k; 5)$, then each $S_4$ contains at least one vertex of $K_{k+3}$ and each $P_5$ contains at least two vertices of $K_{k+3}$. This is a contradiction. Hence $G_3(n, k)$ is $F(2, k; 5)$-free and
\begin{equation}\label{eq3>=}
  ex(n, F(2, k; 5))\geqslant e(G_3(n, k)).
\end{equation}

Now we prove Theorem \ref{mainthm3} by induction on $k$. For $k=0$, $n\geqslant38$, $G_3(n, 0)=K_3\vee\left(K_2\cup \overline{K}_{n-5}\right)$ and $F(2, 0; 5)=2P_5$ hold. Hence the results follow from Lemma \ref{2P5g}. Suppose that $k\geqslant 1$ and the results hold for all $k'<k$. Suppose $G$ is an $F(2, k; 5)$-free graph with $e(G)=ex(n, F(2, k; 5))$. Then by (\ref{eq3>=}) and (\ref{3}), we have
  \begin{align*}
    e(G) & \geqslant e(G_3\left(n,k\right)) \\
     & =(k+3)n-\frac{k^2+7k+10}{2} \\
     & > \left(k+\frac{1}{2}\right)n-\frac{k^2+2k-3}{2}\\
     & = \left(k-1\right)\left(n-\frac{k}{2}\right)+\frac{3(n-k+1)}{2}\\
     & \geqslant\left(k-1\right)\left(n-\frac{k}{2}\right)+\left\lfloor\frac{3(n-k+1)}{2}\right\rfloor\\
     & \geqslant ex(n,kS_4),
  \end{align*}
which implies $G$ contains $k$ copies $S_4$ by Theorem \ref{li2022} and Lemma \ref{sl}. By induction hypothesis,
$$ex(n-5, F(2, k-1; 5))=e(G_3(n-5, k-1)).$$
Since $G$ is $F(2, k; 5)$-free, $G-S_4$ is $F(2, k-1; 5)$-free. Hence,
 \begin{equation}\label{eq3<=}
   e(G-S_4)\leqslant ex(n-5, F(2, k-1; 5))= e(G_3(n-5, k-1)).
 \end{equation}
Let $m_0$ be the number of edges incident with the vertices of $S_4$ in $G$. Noting that $n\geqslant 21k+38$, by (\ref{eq3>=}) and (\ref{eq3<=}), we have
  \begin{align*}
    m_0 & =e(G)-e(G-S_4) \\
     & \geqslant e(G_3(n, k))-e(G_3(n-5, k-1)) \\
     & = n+4k+7 \\
     & \geqslant 5(5k+9).
  \end{align*}
  Then we can construct a vertex subset $U\subseteq V(G)$ of order $k$ whose each vertex has degree at least $5k+9$. Let $\overline{U}=V(G)\backslash U$. Then $\left|\overline{U}\right|=n-k$. Note that
$$ e\left(G[\overline{U}]\right)=e(G)-e(G[U])-e\left(U, \overline{U}\right)
     \geqslant e(G_3(n, k))-e(G[U])-e\left(U, \overline{U}\right).$$
  We consider the following two cases.

\vspace{0.5cm}
  \textbf{Case 1.} $G[U]$ is a clique and each vertex in $U$ is adjacent to each vertex in $\overline{U}$.

  In this case, $e(G[U])=k(k-1)/2$ and $e\left(U, \overline{U}\right)=k(n-k)$. Then by Lemma \ref{2P5g}, we have
  \begin{align*}
    e(G[\overline{U}]) & \geqslant e(G_3(n, k))-e(G[U])-e\left(U, \overline{U}]\right) \\
     & = 3(n-k)-5\\
     & = ex(n-k, 2P_5).
  \end{align*}
   If $e\left(G[\overline{U}]\right)>ex(n-k, 2P_5)$, then $2P_5\subseteq G[\overline{U}]$. Set $W=\overline{U}\backslash V(2P_5)$.
   Note that $$\left|W\right|=\left|\overline{U}\backslash V(2P_5)\right|=n-k-10\geqslant 21k+38-k-10>4k$$
   and each vertex in $U$ is adjacent to each vertex in $W$. Hence, there are $k$ copies of $S_4$ in $G[V(G)\backslash V(2P_5)]$ with $k$ center vertices in $U$ and $4k$ leaves vertices in $W$, and then $2P_5\cup kS_4\subseteq G$, which is a contradiction.

  Hence $e\left(G[\overline{U}]\right)=ex(n-k, 2P_5)$ and $G[\overline{U}]$ does not contain $2$ copies of $P_5$. By Lemma \ref{2P5g} again, $$G[\overline{U}]=\mbox{\rm{EX}}(n-k, 2P_5)=K_3\vee\left(K_2\cup \overline{K}_{n-k-5}\right),$$
  and then $$G=K_{k+3}\vee \left(K_2\cup \overline{K}_{n-k-5}\right)=G_3(n, k).$$

\vspace{0.5cm}
  \textbf{Case 2.} $G[U]$ is not a clique or some vertex in $U$ is not adjacent to some vertex in $\overline{U}$.

  In this case, either $e(G[U])<k(k-1)/2$ or $e\left(U, \overline{U}\right)<k(n-k)$ holds. Then by Lemma \ref{2P5g}, we have
  \begin{align*}
   e\left(G[\overline{U}]\right) & \geqslant e(G_3(n, k))-e(G[U])-e\left(U, \overline{U}\right)\\
    & > (k+3)n-\frac{k^2+7k+10}{2}-\frac{k(k-1)}{2}-k(n-k)\\
    & = 3(n-k)-5\\
    & = ex(n-k, 2P_5),
  \end{align*}
  which implies $2P_5\subseteq G[\overline{U}]$. Set $W=\overline{U}\backslash V(2P_5)$. For any vertex $u\in U$,
  $$d_{G[W]}(u)\geqslant (5k+9)-(k-1)-10=4k.$$
  Hence, we can find $k$ copies of $S_4$ in $G[V(G)\backslash V(2P_5)]$ with $k$ center vertices in $U$ and $4k$ leaves vertices in $W$. Hence, there is a copy of $2P_5\cup kS_4$ in $G$, which is a contradiction.
  The proof is completed.
\end{proof}

\end{document}